\NeedsTeXFormat{LaTeX2e} 

\documentclass{amsart}
\usepackage{amsfonts}
\usepackage{amsmath}
\usepackage{amsthm}
\usepackage{amsfonts,amssymb}
\usepackage{CJK}
\usepackage{enumerate}

\hfuzz1pc 



\newtheorem{thm}{Theorem}[section]

\newtheorem{lem}[thm]{Lemma}

\theoremstyle{remark}
\newtheorem{rem}[thm]{Remark}

\numberwithin{equation}{section}

\newcommand{\al}{\alpha}

\def\oz{\omega}
\def\lz{\lambda}
\def\Lz{\Lambda}
\def\Oz{\Omega}

\def\dz{\delta}
\def\az{\alpha}
\def\gz{\gamma}

\def\tz{\theta}

\def\({\Bigl(}
\def \){ \Bigr)}

\def\sub{\substack}

 \def\az{{\alpha}}
 
 \def\gz{{\gamma}}
 
 \def\tz{{\theta}}
 \def\lz{{\lambda}}
 \def\dz{{\delta}}
 \def\oz{{\omega}}

 \def\RR{{\mathbb R}}

\def\va{\varepsilon}

\def\Lz{\Lambda}

\def\va{\varepsilon}

\def\Lz{\Lambda}

\begin{document}
\def\RR{\mathbb{R}}
\def\Exp{\text{Exp}}
\def\FF{\mathcal{F}_\al}

\title[] {Optimal randomized quadrature for weighted Sobolev and Besov classes with the Jacobi weight on the ball}
\author[]{Jiansong Li} \address{ School of Mathematical Sciences, Capital Normal
University, Beijing 100048,
 China}
\email{2210501007@cnu.edu.cn}

\author[]{ Heping Wang} \address{ School of Mathematical Sciences, Capital Normal
University, Beijing 100048,
 China}
\email{ wanghp@cnu.edu.cn}

\keywords{ Optimal quadrature error; Weighted Sobolev classes;
Weighted Besov classes;  Deterministic case setting; Randomized
case setting; Filtered hyperinterpolation. }

\subjclass[2010]{41A55, 65D30, 65D32}

\begin{abstract}

  We consider the numerical integration $${\rm INT}_d(f)=\int_{\mathbb{B}^{d}}f(x)w_\mu(x)dx $$ for the weighted Sobolev classes
$BW^{r}_{p,\mu}$ and the weighted Besov classes
$BB_\tau^r(L_{p,\mu})$ in the randomized case setting, where
$w_\mu, \,\mu\ge0,$ is the classical Jacobi weight on the ball
$\Bbb B^d$, $1\le p\le \infty$, $r>(d+2\mu)/p$, and
$0<\tau\le\infty$.
 For the above two classes, we obtain the orders of the optimal quadrature errors in the randomized case setting are $n^{-r/d-1/2+(1/p-1/2)_+}$.
 Compared to the orders $n^{-r/d}$ of the optimal quadrature errors in the deterministic case setting, randomness can effectively improve the order of convergence when $p>1$.
\end{abstract}

\maketitle
\input amssym.def

\section{Introduction}

\

Let $F_d$ be a class of continuous functions on $D_d$, where $D_d$ is a
compact subset of the Euclidean space $\mathbb{R}^{d}$ with a probability measure $\rho$. The integral of a continuous function $f:\,F_d\rightarrow\,\Bbb R$ denotes by
\begin{equation}\label{1.1}{\rm INT}_d(f)=\int_{D_d}f(x)d\rho(x).\end{equation}

We want to approximate this integral ${\rm INT}_d(f)$ by
(deterministic) algorithms of the form
$$A_n(f):=\varphi_n(f(x_1), f(x_2),\dots,f(x_n)),$$where $x_j\in D_d$ can be chosen adaptively and $\varphi_n:\Bbb R^n\rightarrow\Bbb R$
is an arbitrary mapping. Adaption means that the selection of
$x_j$ may depend on the already computed values $f(x_1),
f(x_2),\dots,f(x_{j-1})$.  We denoted by $\mathcal{A}_n^{\rm det}$
the class of  all algorithms of this form. If $x_1,\dots,x_n$ are
fixed and $\varphi_n$ is linear, i.e.,
$$A_n(f)=\sum_{j=1}^n\lz_jf(x_j),\ \ \lz_j\in\Bbb R,\
j=1,\dots,n,$$ then the algorithm $A_n$ is called a linear
algorithm. Such linear algorithm $A_n$ is also called a quadrature
formula. We say that a quadrature formula $A_n$ is positive if
$\lz_j>0, \, j=1,\dots,n$.

The deterministic case error of $A_n$ on $F_d$ is given by
\begin{equation*}
  e^{\rm det}(F_d,A_n):=\sup\limits_{f\in F_d}|{\rm INT}_d(f)-A_n(f)|,
\end{equation*}and the minimal (optimal) deterministic case error on $F_d$ given by
\begin{equation*}
  e_n^{\rm det}(F_d):=\inf\limits_{A_n\in\mathcal{A}_n^{\rm det}}e^{\rm det}(F_d,A_n).
\end{equation*}

It was well known (see \cite{B3}) that if  $F_d$ is convex and
balanced, then $e_n^{\rm det}(F_d)$ can be achieved by linear
algorithms. Hence $e_n^{\rm det}(F_d)$  is also called the
optimal quadrature error.

Randomized algorithms, called also Monte-Carlo algorithms, are
understood as $\Sigma\otimes \mathcal B( F_d)$ measurable
functions
$$(A^\oz)=(A^\oz(\cdot))_{\oz\in\Omega}: \Omega\times  F_d\to
\Bbb R,$$ where $\mathcal B( F_d)$  denotes Borel $\sigma$-algebra
of $ F_d$, $(\Omega, \Sigma, \mathcal P)$ is a suitable
probability space, and for any fixed $\oz\in \Omega$, $A^\oz$ is a
deterministic method with cardinality $n(f,\oz)$. The number
$n(f,\oz)$ may be randomized and adaptively depend on the input,
and the cardinality of $(A^\oz)$ is then  defined by $${\rm
Card}(A^\oz):=\sup_{f\in F_d}\mathbb E_\oz\,
n(f,\oz):=\sup\limits_{f\in F_d}\int_{\Omega} n(f,\omega)d\mathcal
P(\omega).$$ We denote by $\mathcal{A}_n^{\rm ran}$  the class of
all randomized algorithms with cardinality not exceeding $n$.

The randomized case error of $(A^\omega)$ on $F_d$ is defined by
$$e^{\rm ran}(F_d,(A^\omega)):=\sup\limits_{f\in F_d}\Bbb E_\omega|{\rm INT}_d(f)-A^{\omega}(f)|,$$and the minimal  (optimal)  randomized case error on $F_d$ is defined by
\begin{equation*}
  e_n^{\rm ran}(F_d):=\inf\limits_{(A^\omega)\in\mathcal{A}_n^{\rm ran}}e^{\rm ran}(F_d,(A^\omega)).
\end{equation*}

There are many papers devoted to investigating the integration
problem \eqref{1.1} in the deterministic  and randomized case
settings. Compared to deterministic algorithms, randomized
algorithms may  speed up the order of convergence in many cases,
especially for integration problem. We recall some known results.

 Throughout the paper, the notation
$a_{n}\asymp b_{n}$ means $a_{n}\lesssim b_{n}$  and $a_{n}\gtrsim
b_{n}$. Here, $a_{n}\lesssim b_{n}\,(a_{n}\gtrsim b_{n})$ means
that there exists a constant $c>0$ independent of $n$ such that
$a_{n}\leq c b_{n}\,(b_{n}\leq c a_{n})$.

\vskip 1mm

(1) Consider the classical Sobolev class $BW_{p}^{r}([0,1]^{d})$,
$1\le p\le\infty$, $r\in\Bbb N$, defined by
 $$BW_{p}^{r}([0,1]^{d})=\big\{f\in L_{p}([0,1]^{d})\,\big|\, \sum_{|\alpha|_{1}\le r}\|D^{\alpha}f\|_{p}\leq1\big\},$$
and  the H\"older class $C^{k,\gz}_d$, $k\in\Bbb N_0$,
$0<\gz\le1$, defined by
$$C^{k,\gamma}_d:=\big\{f\in C([0,1]^d)\,\big|\,|D^\az f(x)-D^\az f(y)|\le \max\limits_{1\le i\le d}|x_i-y_i|^\gz, \, |\az|_1= k\big\},$$
where $\az\in \Bbb N_0^d,\
|\alpha|_{1}:=\sum\limits_{i=1}^d\alpha_i$, and $D^\az f$ is the
partial derivative  of order $\az$ of $f$ in the sense of
distribution. Bakhvalov in \cite{B1} and \cite{B2} proved that
 \begin{equation*}e_{n}^{\rm det}(C^{k,\gz}_d)\asymp n^{-\frac{k+\gz}{d}}\ \ \ {\rm and}\ \ \  e_{n}^{\rm det}(BW_{\infty}^{r}([0,1]^{d}))\asymp n^{-\frac{r}{d}}.\end{equation*}
 Novak extended the second equivalence  result in \cite{N2} and \cite{N3}, and proved that for $1\leq p<\infty$  and $r>d/p$,
 \begin{equation*}e_{n}^{\rm det}(BW_{p}^{r}([0,1]^{d}))\asymp n^{-\frac{r}{d}}. \end{equation*}
 Meanwhile, Novak considered the randomized case errors of the above two classes in \cite{N2} and \cite{N3},  and proved that
 \begin{equation*}e_{n}^{\rm ran}(C^{k,\az}_d)\asymp
n^{-\frac{k+\az}{d}-\frac{1}{2}},\end{equation*}
and for $1\leq p\le\infty$ and $r>d/p$,
\begin{equation*}e_{n}^{\rm ran}(BW_{p}^{r}([0,1]^{d}))\asymp n^{-\frac{r}{d}-\frac{1}{2}+(\frac{1}{p}-\frac{1}{2})_{+}},\end{equation*}where
$a_{+}=\max(a,0)$.

\vskip 1mm

 (2) Consider the anisotropic Sobolev class
$BW_{p}^{{\bf r}}([0,1]^{d}),\ 1\leq p\leq\infty,\ {\bf
r}=(r_{1},\cdots, r_{d})\in \mathbb{N}^{d}$, defined by
$$ BW_{p}^{{\bf r}}([0,1]^{d})=\big\{f\in
L_{p}([0,1]^{d})\,\big|\,\sum_{j=1}^{d}\big\|\frac{\partial^{r_{j}}f}{\partial
x_{j}^{r_{j}}}\big\|_p\le 1\big\}.$$  Fang and Ye in \cite{FY1}
obtained for $g({\bf r})>d/{p}$,
\begin{equation*}e_{n}^{\rm det}(BW_{p}^{{\bf r}}([0,1]^{d}))\asymp n^{-g({\bf r})},\end{equation*} and for $g({\bf
r})>1/{p}$,
\begin{equation*}e_{n}^{\rm ran}(BW_{p}^{{\bf r}}([0,1]^{d}))\asymp n^{-g({\bf r})-\frac{1}{2}+(\frac{1}{p}-\frac{1}{2})_{+}},
\end{equation*}where $g({\bf
r})=\big(\sum_{j=1}^{d}r_{j}^{-1}\big)^{-1}$. For the anisotropic
H\"older-Nikolskii  classes, Fang and Ye  obtained the similar
results in \cite{FY1}.

\vskip 1mm

(3) Consider the Sobolev class with bounded mixed derivative
$BW_{p}^{r,\rm mix}([0,1]^{d}),\ r\in\mathbb {N}, \ 1\leq
p\leq\infty$, defined by
$$BW_{p}^{r,\rm mix}([0,1]^{d})=\big\{f\in
L_{p}([0,1]^{d})\,\big|\,\sum_{|\alpha|_{\infty}\leq
r}\|D^{\alpha}f\|_{p}\leq1\big\},$$where
$|\alpha|_{\infty}:=\max\limits_{1\le i\le d}\az_i$. The authors
in \cite{BV, FKK,  SM, T3} obtained for $r>1/{p}$  and  $
1<p<\infty$,
\begin{equation*}e_{n}^{\rm det}(BW_{p}^{r,\rm mix}([0,1]^{d}))\asymp n^{-r}(\log n)^{\frac{d-1}{2}} .\end{equation*}
It was shown in \cite{KN, NUU, U} that for $r>\max\{1/p,1/2\}$ and
$1< p<\infty$,
\begin{equation*}e_{n}^{\rm ran}(BW_{p}^{r,\rm mix}([0,1]^{d}))\asymp n^{-r-\frac{1}{2}+(\frac{1}{p}-\frac{1}{2})_+}.\end{equation*}

\vskip 1mm

(4) For the Sobolev class $BW_p^r(\Bbb S^{d-1})$, $1\le p\le
\infty$, $r>0$, on the sphere $\Bbb S^{d-1}$,    it was proved in
\cite{BH, H, Wang} that for  $r>(d-1)/p$,
\begin{equation*}e_{n}^{\rm det}(BW_{p}^{r}(\mathbb{S}^{d-1}))\asymp n^{-\frac{r}{d-1}}.\end{equation*}
Wang and Zhang in \cite{WZ} obtained  for $r>(d-1)/p$,
\begin{equation*}e_{n}^{\rm ran}(BW_{p}^{r}(\mathbb{S}^{d-1}))\asymp  n^{-\frac r{d-1}-\frac{1}{2}+(\frac{1}{p}-\frac{1}{2})_+}.
\end{equation*}

\vskip 1mm

(5) For the generalized Besov class $BB_{p,\tz}^\Omega(\Bbb
S^{d-1})$, $1\le p,\tz\le \infty,$ with the smoothness index
$\Omega $ satisfying some conditions,  Duan and Ye in \cite{DY}
obtained
\begin{equation*}e_{n}^{\rm det}(BB_{p,\tz}^\Omega(\Bbb S^{d-1}))\asymp \Omega(n^{-\frac1{d-1}}),\end{equation*}
and
\begin{equation*}e_{n}^{\rm ran}(BB_{p,\tz}^\Omega(\Bbb S^{d-1}))\asymp \Omega(n^{-\frac1{d-1}}) n^{-\frac{1}{2}+(\frac{1}{p}-\frac{1}{2})_+}.
\end{equation*}
We remark that if $\Oz(t)=t^r$, then $BB_{p,\tz}^\Omega(\Bbb
S^{d-1})$ recedes to the usual Besov class $BB^r_\tz(L_p(\Bbb
S^{d-1}))$.

\vskip 1mm

(6) Dai and Wang in \cite{DW} investigated  the weighted Besov
class $BB_\tau^r(L_{p,w}(\Bbb S^{d-1}))$, $r>0,\, 0<\tau\le
\infty,\, 1\le p\le \infty,$ with an $A_\infty$ weight $w$ on
$\Bbb S^{d-1}$. They obtained  for $r>s_w/p$,
\begin{equation*}
e_{n}^{\rm det}(BB_\tau^r(L_{p,w}(\Bbb S^{d-1})))\asymp
n^{-\frac{r}{d-1}},
\end{equation*}
where $s_w$ is a critical index for the $A_\infty$ weight $w$.
This generalized the unweighted result of \cite{HMS}. Meanwhile,
they also obtained the corresponding results for the weighted
Besov classes on the unit ball and on the standard simplex of the
Euclidean space $\Bbb R^d$.

\vskip 1mm

The above results  indicate that randomized algorithms effectively
improve the optimal rate of convergence in many cases. There is a
vast literature of integration problems in the deterministic and
randomized case settings, see for example, \cite{DW, Du1, Du2, DU,
FY1, H4, N3, N4, UU, W1, Y}. However, as far as we know, there are
few results about integration problem on the unite ball $\Bbb B^d$
in the randomized case setting.

\vskip 2mm

Let  $\mathbb{B}^{d}=\big\{x\in \mathbb{R}^d\,|\,|x|\leq 1\big\}$
be the unit ball of $\mathbb{R}^d$, where $x\cdot y$ is the usual
inner product and $|x|=(x\cdot x)^{1/2}$ is the usual Euclidean
norm. We denote by $L_{p,\mu}\equiv L_{p,\mu}(\Bbb B^{d}),\,
0<p<\infty,$ the space of  all measurable functions with finite
quasi-norm
$$\|f\|_{p,\mu}:=\Big(\int_{\mathbb{B}^{d}}|f(x)|^pw_\mu(x)dx\Big)^{1/p},$$
where $w_\mu(x)=b_{d}^\mu(1-|x|^2)^{\mu-1/2},\ \mu\ge0$ is the
classical Jacobi weight on $\Bbb B^{d}$, normalized by $\int_{\Bbb
B^{d}}w_\mu(x)dx=1$. When $p=\infty$ we consider the space  of
continuous functions $C(\Bbb B^{d})$ with the uniform norm. Let
$BW_{p,\mu}^{r}$ and $BB_\tau^r(L_{p,\mu})$,  $1\leq p\leq
\infty$, $r>0$, $0<\tau\le \infty$,  denote the weighted Sobolev
class and the weighted Besov class on $\mathbb{B}^{d}$,
respectively (see the precise definitions in Section 2). We remark
that  if $r>(d+2\mu)/p$, then the spaces $W_{p,\mu}^r$ and
$B_\tau^r(L_{p,\mu})$ are compactly embedded into the space of
continuous functions $C(\mathbb{B}^{d})$.

This paper is concerned with numerical integration on $\Bbb B^d$
\begin{equation}\label{1.2} {\rm
INT}_d(f)=\int_{\mathbb{B}^{d}}f(x)w_\mu(x)dx.
\end{equation}

 For the
weighted Besov class $BB_\tau^r(L_{p,\mu})$, $1\le p\le \infty$,
$0<\tau\le \infty$,  and $r>(d+2\mu)/p$, it follows from \cite{DW}
that
\begin{equation}\label{1.3}
e_{n}^{\rm det}(BB_\tau^r(L_{p,\mu}))\asymp n^{-\frac{r}{d}}.
\end{equation}

For the weighted Sobolev class $BW_{p,\mu}^r$, $1\le p\le \infty$,
$r>(d+2\mu)/p$, we obtain the similar result as follows.

\begin{thm}\label{thm1}Let $1\le p\le\infty$ and
$r>(d+2\mu)/{p}$. Then we have
\begin{equation}\label{1.4}e_{n}^{\rm det}(BW_{p,\mu}^{r})\asymp
n^{-\frac{r}{d}}.
\end{equation}\end{thm}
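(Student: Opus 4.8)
The plan is to deduce the two-sided estimate \eqref{1.4} directly from the already-established Besov result \eqref{1.3} via continuous embeddings between the weighted Sobolev class and the weighted Besov classes, combined with the monotonicity of the optimal quadrature error under inclusion. Since the Sobolev and Besov balls are convex and balanced, linear algorithms are optimal for them (as recalled above, see \cite{B3}), and for a linear algorithm the integration error is positively homogeneous in $f$; hence an inclusion $F_d^{(1)}\subseteq c\,F_d^{(2)}$ implies $e_n^{\rm det}(F_d^{(1)})\le c\,e_n^{\rm det}(F_d^{(2)})$. A sandwich of $BW_{p,\mu}^r$ between two Besov balls of the same smoothness order $r$ will therefore transfer both the upper and the lower bound from \eqref{1.3}.

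Concretely, the key step is to establish, for $1\le p\le\infty$ and $r>(d+2\mu)/p$, the continuous embeddings
\begin{equation*}
B_1^r(L_{p,\mu})\hookrightarrow W_{p,\mu}^r\hookrightarrow B_\infty^r(L_{p,\mu}),
\end{equation*}
equivalently the ball inclusions $c_1\,BB_1^r(L_{p,\mu})\subseteq BW_{p,\mu}^r\subseteq c_2\,BB_\infty^r(L_{p,\mu})$ for suitable constants $c_1,c_2>0$. Granting these, the right-hand inclusion and monotonicity give $e_n^{\rm det}(BW_{p,\mu}^r)\le c_2\,e_n^{\rm det}(BB_\infty^r(L_{p,\mu}))\lesssim n^{-r/d}$ by \eqref{1.3}, which is the upper bound; the left-hand inclusion gives $e_n^{\rm det}(BW_{p,\mu}^r)\ge c_1\,e_n^{\rm det}(BB_1^r(L_{p,\mu}))\gtrsim n^{-r/d}$, again by \eqref{1.3}, which is the matching lower bound. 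Together these yield \eqref{1.4}.

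The embeddings themselves I would establish through the Littlewood--Paley decomposition $f=\sum_{j\ge 0}\eta_j(f)$ attached to the orthogonal polynomial expansion with respect to $w_\mu$ on $\mathbb{B}^{d}$ (the same apparatus used to define $B_\tau^r(L_{p,\mu})$ in Section 2), where each block $\eta_j(f)$ is a polynomial of degree comparable to $2^j$. The inclusion $W_{p,\mu}^r\hookrightarrow B_\infty^r(L_{p,\mu})$ reduces to the uniform bound $\|\eta_j(f)\|_{p,\mu}\lesssim 2^{-jr}\|f\|_{W_{p,\mu}^r}$, which follows from the $L_{p,\mu}$-boundedness of the block projections together with the fact that the smoothness operator defining the weighted Sobolev norm acts like multiplication by $2^{jr}$ on the $j$-th frequency block. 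Conversely, $B_1^r(L_{p,\mu})\hookrightarrow W_{p,\mu}^r$ follows by applying a Bernstein-type inequality to each polynomial block $\eta_j(f)$ and summing, the $\ell^1$-structure of the $\tau=1$ Besov norm being exactly what guarantees convergence and reproduces $\|f\|_{W_{p,\mu}^r}$. These are the weighted-ball analogues of the classical embeddings $B_{p,1}^r\hookrightarrow W_p^r\hookrightarrow B_{p,\infty}^r$.

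The main obstacle is carrying out these embeddings in the genuinely weighted, non-translation-invariant setting on $\mathbb{B}^{d}$: there is no convolution structure as on the torus, so the Bernstein and multiplier estimates must be extracted from the Jacobi-type orthogonal expansion itself. Once the requisite Bernstein and Nikolskii inequalities for polynomials in $L_{p,\mu}(\mathbb{B}^{d})$ are available (these are standard in the weighted approximation literature and recalled in Section 2), the embedding chain, and hence \thmref{thm1}, follows at once. I expect the hypothesis $r>(d+2\mu)/p$ to enter precisely where the compact embedding into $C(\mathbb{B}^{d})$ is invoked, ensuring that point evaluations—and therefore the quadrature functionals—are well defined on both the Sobolev and the Besov balls.
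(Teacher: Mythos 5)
Your proposal is correct, and the upper bound coincides with the paper's: both use $W_{p,\mu}^r\hookrightarrow B_\infty^r(L_{p,\mu})$ (via the Jackson inequality \eqref{2.1}) together with \eqref{1.3}. For the lower bound, however, you take a genuinely different route. The paper does not pass through $B_1^r(L_{p,\mu})$; instead it constructs an explicit fooling function: given arbitrary nodes $\xi_1,\dots,\xi_n$, it takes $N$ with $\dim\Pi_N^d\asymp n$, uses Lemma \ref{lem4.1} (from \cite{DW}) to produce $g_0\in\Pi_N^d$ vanishing at all nodes with $\|g_0\|_{p_0,\mu}\asymp 1$ for every $p_0$, and sets $f_1=CN^{-r}g_0^2$, a nonnegative function in $BW_{p,\mu}^r$ vanishing at the nodes with ${\rm INT}_d(f_1)\asymp N^{-r}$; the classical bound of \cite{TWW} then gives $e_n^{\rm det}(BW_{p,\mu}^r)\gtrsim n^{-r/d}$. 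Your route replaces this construction by the embedding $B_1^r(L_{p,\mu})\hookrightarrow W_{p,\mu}^r$ plus monotonicity and homogeneity of $e_n^{\rm det}$, importing the lower bound from \eqref{1.3} with $\tau=1$. This is legitimate: the needed Bernstein-type estimate is exactly \eqref{122}, and summing it over the dyadic blocks $f_k=g_{2^k}-g_{2^{k-1}}$ (the same decomposition used in the proof of Theorem \ref{thm3}) gives $\|(-D_\mu)^{r/2}f\|_{p,\mu}\le\sum_k\|(-D_\mu)^{r/2}f_k\|_{p,\mu}\lesssim\sum_k 2^{kr}E_{2^{k-1}}(f)_{p,\mu}\lesssim\|f\|_{B_1^r(L_{p,\mu})}$; the homogeneity $e_n^{\rm det}(cF)=c\,e_n^{\rm det}(F)$ holds for arbitrary adaptive algorithms by rescaling the information, so the appeal to \cite{B3} is not even essential. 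What each approach buys: yours is shorter and purely "soft," leaning entirely on the quoted result \eqref{1.3} and on an embedding the paper does not state but could easily have; the paper's construction is more work but yields a concrete nonnegative fooling polynomial that is reused to prove the sampling-number lower bound in Theorem \ref{thm3.5}, which your embedding argument would not give.
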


From \eqref{1.3} and \eqref{1.4}, we know that the integration
problems \eqref{1.2} for the weighted Besov class
$BB_\tau^r(L_{p,\mu})$ and  the weighted Sobolev class
$BW_{p,\mu}^r$ is ``intractable" in the deterministic setting if
$d$ is much larger than $r$. So it is natural to ask whether
randomness improves the order of convergence. In this paper we
investigate randomized quadrature for $BW_{p,\mu}^{r}$ and
$BB_\tau^r(L_{p,\mu})$. We obtain their sharp asymptotic orders of
quadrature errors in the randomized case setting, and find that
randomized algorithms provide a faster rate than that of
deterministic ones for $p>1$. Our main results can formulated as
follows.

\begin{thm}\label{thm2}
Let  $1\le p\le\infty$, $0<\tau\le\infty$, and $r>(d+2\mu)/{p}$.
Then we have
\begin{equation}\label{1.5}e_{n}^{\rm ran}(BX_p^{r})\asymp
n^{-\frac{r}{d}-\frac{1}{2}+(\frac{1}{p}-\frac{1}{2})_+},
\end{equation}where $X_p^r$ denotes $W_{p,\mu}^{r}$ or $B_\tau^r(L_{p,\mu})$.
\end{thm}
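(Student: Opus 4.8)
The plan is to establish the two matching bounds separately: the lower bound $e_n^{\rm ran}(BX_p^r)\gtrsim n^{-r/d-1/2+(1/p-1/2)_+}$ and the upper bound $e_n^{\rm ran}(BX_p^r)\lesssim n^{-r/d-1/2+(1/p-1/2)_+}$. Since the Sobolev class $W^r_{p,\mu}$ embeds into a Besov class $B^r_\tau(L_{p,\mu})$ for a suitable $\tau$ (and conversely contains one), it should suffice to prove the upper bound for the Besov scale and the lower bound for the Sobolev scale; the interpolation/embedding relations among these classes then transfer each bound to both, so I would first record those embeddings to reduce the work to one class per direction.

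For the \textbf{lower bound}, I would use Bakhvalov's classical technique adapted to the weighted ball. The idea is to construct a family of fooling functions: take a single smooth bump $\psi$ supported in a small cap, rescale and translate it to obtain $n$ (or $\asymp n$) disjointly supported bumps whose centers are spread over $\Bbb B^d$ with respect to the weight $w_\mu$, each normalized so that the sum lies in the unit ball of $X^r_p$. A randomized algorithm using $n$ points can ``see'' only a bounded number of these bumps on average, so by attaching independent random signs $\pm$ to each bump and invoking a Borell/average-case argument (i.e. bounding the expected error from below by integrating over the sign choices), one obtains a lower bound of the form $n^{-r/d}\cdot n^{-1/2}$ times the normalization factor coming from the $L_p$-normalization of a single scaled bump. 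The exponent $(1/p-1/2)_+$ enters precisely through the interplay between the $L_p$-normalization of the bumps and the $\ell_2$ concentration of the random signs; the careful bookkeeping of the Jacobi weight $w_\mu$ near the boundary $|x|=1$ (where the bump sizes must be chosen consistently with the degenerating measure) is the delicate point here, but the scaling $w_\mu(x)\sim (1-|x|^2)^{\mu-1/2}$ only affects constants once the caps are chosen adapted to the metric induced by the weight.

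For the \textbf{upper bound}, the natural strategy is a variance-reduction (control variate) Monte Carlo method built on a good deterministic approximation. Using filtered hyperinterpolation (advertised in the keywords) I would construct a near-best polynomial approximant $L_m f$ of degree $m$ with $\dim \le c\,m^d \asymp n$, so that $\|f-L_m f\|_{p,\mu}\lesssim m^{-r}\asymp n^{-r/d}$ for $f$ in the unit ball of $X^r_p$. Then I would split ${\rm INT}_d(f)=\int_{\Bbb B^d} L_mf\,w_\mu\,dx+\int_{\Bbb B^d}(f-L_mf)\,w_\mu\,dx$; the first integral is computed exactly (or by an exact cubature for polynomials of degree $\le m$, which exists with $\asymp n$ nodes), and the second is estimated by a plain Monte Carlo average of $f-L_mf$ at $\asymp n$ i.i.d. points drawn from $w_\mu\,dx$. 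The expected error of the Monte Carlo part is at most $n^{-1/2}\|f-L_mf\|_{2,\mu}$, and controlling $\|f-L_mf\|_{2,\mu}$ by $\|f-L_mf\|_{p,\mu}$ (directly when $p\ge 2$, and via a Nikolskii-type inequality raising $L_p$ to $L_2$ on the polynomial-plus-tail when $p<2$) yields exactly the factor $n^{-r/d-1/2+(1/p-1/2)_+}$.

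The main obstacle I anticipate is the upper bound in the range $1\le p<2$, where the naive estimate $\|f-L_mf\|_{2,\mu}\le\|f-L_mf\|_{p,\mu}$ fails and one must recover the gain $(1/p-1/2)$ rather than losing it. The standard remedy is a multilevel (Heinrich-type) Monte Carlo scheme: decompose $f=\sum_j (L_{m_j}f-L_{m_{j-1}}f)$ into dyadic frequency blocks, apply an independent Monte Carlo estimator to each block with a number of samples $N_j$ optimally allocated across levels, and use the Besov-norm control of the block norms $\|L_{m_j}f-L_{m_{j-1}}f\|_{p,\mu}$ together with Nikolskii's inequality on each (polynomial) block to convert the $L_p$ bound into an $L_2$ variance bound with the sharp dependence. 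Balancing the total cost $\sum_j N_j\asymp n$ against the accumulated variance is where the exponent $(1/p-1/2)_+$ is produced exactly, and getting this allocation right — while respecting the boundary behavior of $w_\mu$ in the Nikolskii inequalities on the ball — is the technical heart of the argument.
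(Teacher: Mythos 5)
Your overall architecture matches the paper's: a control-variate Monte Carlo estimator built on filtered hyperinterpolation for the upper bound, and a Bakhvalov--Novak family of disjointly supported bumps (random signs on $4n$ bumps for $p\ge2$, single signed bumps for $p<2$, via Novak's lemma) for the lower bound. The lower-bound sketch is essentially the paper's proof of \eqref{4.3}; note that the paper simply places all bumps inside ${\bf B}({\bf 0},\tfrac23)$, so the boundary degeneration of $w_\mu$ that you worry about never enters. The genuine problem is your treatment of the upper bound in the range $1\le p<2$, which you correctly single out as the technical heart but for which your proposed remedy does not produce the stated exponent once $\mu>0$. A multilevel scheme that converts the $L_{p,\mu}$ norms of polynomial blocks $\Delta_j$ of degree $2^j$ into $L_{2,\mu}$ variances via the Nikolskii inequality on the ball pays $(d+2\mu)(1/p-1/2)$ per unit of degree (Lemma \ref{lem2.6-0}), whereas the target exponent only tolerates $d(1/p-1/2)$. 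Concretely, $\|\Delta_j\|_{2,\mu}\lesssim 2^{j[(d+2\mu)(1/p-1/2)-r]}$, and minimizing $\sum_j N_j^{-1/2}\|\Delta_j\|_{2,\mu}$ subject to $\sum_j N_j\asymp n$ yields at best $n^{-r/d-1/2+(1+2\mu/d)(1/p-1/2)}$. This coincides with the claimed rate only when $\mu=0$; for every $\mu>0$ and $1\le p<2$ it is strictly worse, and no reallocation of samples across levels repairs it, because the loss sits in the variance bound itself.

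The paper avoids the $L_p\to L_2$ conversion altogether and needs no multilevel decomposition. Its key ingredient is Heinrich's lemma (Lemma \ref{prop4.2}, i.e.\ \cite[Proposition 5.4]{H4}), valid on any probability space and hence for $w_\mu(x)dx$: the plain Monte Carlo rule $Q_N^\omega$ with i.i.d.\ nodes drawn from $w_\mu(x)dx$ satisfies $\mathbb{E}_\omega|{\rm INT}_d(h)-Q_N^\omega(h)|\lesssim N^{-1/2+(1/p-1/2)_+}\|h\|_{p,\mu}$ for all $1\le p\le\infty$; for $1\le p\le2$ this is $N^{-1+1/p}\|h\|_{p,\mu}$, obtained by interpolating the trivial $L_1$ bound with the $L_2$ variance bound rather than by passing through $L_2$. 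Applying this to $h=f-G_L(f)$ with $\|f-G_L(f)\|_{p,\mu}\lesssim L^{-r}$ (Theorem \ref{thm3}) gives the sharp rate in a single step, uniformly in $\mu$. Replace your Nikolskii/multilevel step by this $L_p$-based error bound for the crude Monte Carlo estimator and the rest of your argument goes through.
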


\begin{rem}\label{rem2} We compare the results in the deterministic and randomized case settings. For $p=1$, the order of convergence is the same,
which means that  randomness does not help for $p=1$. Randomness
does help for $1<p\leq\infty.$ Indeed, randomness improve the
order of convergence by a factor $n^{1-1/p}$ for $1<p<2$ and
$n^{1/2}$ for $2\leq p\leq\infty$.
\end{rem}

The organization of the paper is the following. Section \ref{sec2}
presents some facts about harmonic analysis on the ball. In
Section 3 we use the filtered hyperinterpolation operators to
approximate functions in  $W_{p,\mu}^{r}$ or
$B_\tau^r(L_{p,\mu})$, and show that  the filtered
hyperinterpolation operators are asymptotically optimal algorithms
in the sense of optimal recovery in some cases.  We also give the
proof of Theorem \ref{thm1}. Section \ref{sec3} and Section
\ref{sec4} are devoted to proving the upper and lower estimates of
the quantities $e_{n}^{\rm ran}(BX_p^{r})$ as in Theorem
\ref{thm2}, respectively.

\section{Preliminaries}\label{sec2}

This section is devoted to give some basic knowledge about harmonic analysis on the unit ball $\Bbb B^d$.

For the classical Jacobi weight
$$w_\mu(x)=b_{d}^\mu(1-|x|^2)^{\mu-1/2},\ \mu\ge0,\
b_{d}^\mu=\Big(\int_{\Bbb B^d}(1-|x|^2)^{\mu-1/2}dx\Big)^{-1},\
$$ on $\mathbb{B}^{d}$, denote by $L_{p,\mu}\equiv L_{p,\mu}(\Bbb
B^d)\,(0< p< \infty)$
 the space of all Lebesgue measurable functions $f$ on
$\mathbb{B}^{d}$ with the finite quasi-norm
$$\|f\|_{p,\mu}:=\Big(\int_{\mathbb{B}^{d}}|f(x)|^pw_\mu(x)dx\Big)^{1/p}.$$And when $p=\infty$ we consider the space  of
continuous functions $C(\Bbb B^d)$ with the uniform norm. In particular, $L_{2,\mu}$ is a Hilbert space with inner product $$\langle f,g\rangle_\mu:=\int_{\Bbb B^d}f(x)g(x)w_\mu(x)dx,\ {\rm for}\ f,g\in L_{2,\mu}.$$

Let $\Pi_n^{d}$ be the space of all polynomials in $d$ variables
of total degree at most $n$.  We denote  by
$\mathcal{V}_n^d(w_\mu)$ the space of all  polynomials of degree
$n$ which are orthogonal to lower degree polynomials in
$L_{2,\mu}.$ It is well known (see \cite[p.38 or p.229]{DX}) that
the spaces $\mathcal{V}_n^d(w_\mu)$ are just the eigenspaces
corresponding to the eigenvalues $-n(n+2\mu+d-1)$ of the
second-order differential operator
$$
 D_\mu:=\triangle-(x\cdot\nabla)^2-(2\mu+d-1)x\cdot \nabla,
$$
where $\triangle$ and $\nabla$ are the Laplace operator and gradient operator, respectively. More precisely, $$D_\mu P=-n(n+2\mu+d-1)P,\ \ {\rm for\ all}\ P\in\mathcal{V}_n^d(w_\mu).$$

It is easy to see that the spaces $\mathcal{V}_n^d(w_\mu)$  are
mutually orthogonal in $L_{2,\mu}$. Let
$\{\phi_{nk}\equiv\phi_{nk}^d:\, k=1,2,\dots,a_n^d\}$ be a fixed
orthonormal basis for $\mathcal{V}_n^d(w_\mu)$, where $a_n^d:={\rm
dim}\,\mathcal{V}_n^d(w_\mu)$. Then $$\{\phi_{nk}:\,
k=1,2,\dots,a_n^d,\, n=0,1,2,\dots\}$$ is an orthonormal basis for
$L_{2,\mu}$.
 The orthogonal projector ${\rm Proj}_n :\, L_{2,\mu}\rightarrow \mathcal{V}_n^d(w_\mu)$ can be written
 as\begin{align*}
     ({\rm Proj}_n f)(x)&=\sum\limits_{k=1}^{a_n^d}\langle f,\phi_{nk}\rangle\phi_{nk}(x)
     =\langle f,P_n(w_\mu; x,\cdot)\rangle_\mu,
   \end{align*}
 where $P_n(w_\mu; x, y)=\sum\limits_{k=1}^{a_n^d}\phi_{nk}(x)\phi_{nk}(y)$ is the reproducing kernel of $\mathcal{V}_n^d(w_\mu)$. See \cite{Xu1} for more details about $P_n(w_\mu; x, y)$.

 For $r>0$, we define the fractional power $(-D_\mu)^{r/2}$ of the operator $-D_\mu$ on $f$ by
$$(-D_\mu)^{r/2}f:=\sum\limits_{k=0}^\infty (k(k+2\mu+d-1))^{r/2}{\rm Proj}_kf,$$in the sense of distribution.
By \cite{Xu}, we have for any $P\in\Pi_n^d$,
\begin{equation}\label{122}
 \|(-D_\mu)^{r/2}P\|_{p,\mu}\lesssim n^r\|P\|_{p,\mu}.
\end{equation}

 Given $r>0$ and $1\le p\le\infty$, we define the weighted Sobolev space by
$$W_{p,\mu}^r\equiv W_{p,\mu}^r(\Bbb B^d):=\big\{f\in L_{p,\mu}\ \big|\ \|f\|_{W_{p,\mu}^r}:=\|f\|_{p,\mu}+\|(-D_\mu)^{r/2}f\|_{p,\mu}<\infty\big\},$$
while the weighted Sobolev class $BW_{p,\mu}^r$ is defined to be
the unit ball of the weighted Sobolev space  $W_{p,\mu}^r$. We
remark that if $r>(d+2\mu)/p$, then $W_{p,\mu}^r$ is compactly
embedded into $C(\Bbb B^d)$.

Let $\eta \in C^\infty[0,+\infty)$ (a ``$C^\infty$-filter'')
satisfy
$$\chi_{\left[0,1\right ]}\leq\eta\leq\chi_{\left[0,2\right]}.$$Here, $\chi_A$ denotes the characteristic function of $A$ for
$A\subset \Bbb R$. For $L\in\Bbb N$, we define the filtered
polynomial operator by
\begin{equation}\label{3.1}
V_L(f)(x)\equiv V_{L,\eta}(f)(x):= \sum_{k=0}^{\infty}\eta (\frac
kL){\rm Proj}_{k}(f)(x)=\langle f, K_{L,\eta}(x,\cdot)\rangle_\mu,
\end{equation}where $f\in L_{1,\mu}$, and \begin{equation}\label{98-0}K_{L,\eta}(x,y)=\sum_{k=0}^\infty \eta(\frac kL)P_k(w_\mu; x,y),\ x,y\in\Bbb B^d.\end{equation}
Then the following properties hold (see, for example, \cite{PX})):

(a) $V_L(f)\in \Pi^d_{2L-1}$ for any $f\in L_{1,\mu}$;

(b) $P=V_L(P)$ for any  $P\in\Pi_L^d$;

(c)
$\|V_L\|:=\|V_L\|_{(\infty,\infty)}=\|V_L\|_{(1,1)}=\sup\limits_{x\in\mathbb{B}^d}\|K_{L,\eta}(x,\cdot)\|_{1,\mu}\lesssim1$;

(d) $\|V_L\|_{(p,p)}\le \|V_L\|\lesssim 1$ for $1\le p\le \infty$;

(e) $\|f-V_L(f)\|_{p,\mu}\le
(1+\|V_L\|_{(p,p)})E_L(f)_{p,\mu}\lesssim E_L(f)_{p,\mu}$ for
$1\le p\le \infty$,

\noindent where $$\|A\|_{(p,p)}:=\sup\limits_{\|f\|_{p,\mu}\le
1}\|Af\|_{p,\mu}$$ is the operator norm of a linear operator $A$
on $L_{p,\mu}$, and $E_L(f)_{p,\mu}$ is the best approximation of
$f\in L_{p,\mu}$ from $\Pi_L^d$ defined by
$$E_L(f)_{p,\mu}:=\inf\limits_{P\in\Pi_L^d}\|f-P\|_{p,\mu}.$$
We note that property (c) is essential.

\begin{rem} Let $\eta$ be a filter, i.e., $\eta$ is a continuous function
satisfying $\chi_{\left[0,1\right
]}\leq\eta\leq\chi_{\left[0,2\right]}.$ We may weaken the
smoothness condition on $\eta$  such that the operator norms
$\|V_{L,\eta}\|$ are uniformly bounded. Wang and Sloan
investigated  the corresponding problem on the sphere,  and gave
the compact condition on $\eta$ for which the  operator norms of
the filtered polynomial operators $V^{\Bbb S}_{L,\eta}$ on the
sphere are uniformly bounded. Following the way in \cite{WS}, Li
obtained in \cite{Li} that  the operator norms  $\|V_{L,\eta}\|$
are uniformly bounded whenever $\eta\in W^{\lfloor
\frac{d+2\mu+1}{2}\rfloor}BV$, where $W^rBV[a,b]$ denotes the set
of all  continuous functions $\eta$ on $[a,b]$ for which
$\eta^{(r-1)}$ is absolutely continuous and $\eta^{(r)}_+$ and
$\eta^{(r)}_-$ exist and are of bounded variation on $[a,b]$ for $
r\in\Bbb N$. Hence, if $\eta\in W^{\lfloor
\frac{d+2\mu+1}{2}\rfloor}BV$, then properties (a)-(e) hold.
 Note that
$$C^{r+1}[a,b]\subset W^rBV[a,b]\subset C[a,b],\ {\rm for\ any}\
r\in\Bbb N.$$ The condition $\eta\in W^{\lfloor
\frac{d+2\mu+1}{2}\rfloor}BV$ is compact, since there exists an
$\eta\in W^{\lfloor \frac{d+2\mu-1}{2}\rfloor}BV$ such that
$\|V_{L,\eta}\|$ are not uniformly bounded.
\end{rem}

Now we define weighted  Besov spaces on the ball. Given $1\le p\le
\infty$, $r>0$, and $0<\tau\le\infty$, we define the weighted
Besov space $B_\tau^r(L_{p,\mu})$ to be the space of all real
functions $f$ with quasi-norm
$$\|f\|_{B_\tau^r(L_{p,\mu})}:=\Bigg\{
\begin{aligned}&\|f\|_{p,\mu}+\Big(\sum\limits_{j=0}^\infty2^{jr\tau}E_{2^j}(f)_{p,\mu}^\tau\Big)^{1/\tau},\ \ &0<\tau<\infty,
\\
&\|f\|_{p,\mu}+\sup\limits_{j\ge0}2^{jr}E_{2^j}(f)_{p,\mu},\ \ \
&\tau=\infty,\end{aligned}$$ while the weighted Besov class
$BB_\tau^r(L_{p,\mu})$ is defined to be the unit ball of the
weighted Besov space $B_\tau^r(L_{p,\mu})$.

There are other definitions of the weighted  Besov spaces which
are equivalent (see \cite[Proposition 5.7]{KPX}). We remark  that
if $1\le p,q\le\infty,\ r>(d+2\mu)/p$, then $B_\tau^r(L_{p,\mu})$
is compactly embedded into $C(\Bbb B^d)$, and for $1\le
p\le\infty,\ r>0$, $0<\tau_1\le\tau_2\le \infty$,
\begin{equation*}
B_{\tau_1}^r(L_{p,\mu})\subset B_{\tau_2}^r(L_{p,\mu})\subset
B_\infty^r(L_{p,\mu}).
\end{equation*}
It follows from  the Jackson inequality (see \cite{Xu}) that for
 $f\in W_{p,\mu}^{r},\ 1\le p\le \infty$, $r>0$,
\begin{equation}\label{2.1}
  E_{n}(f)_{p,\mu}\lesssim n^{-r}\|f\|_{W_{p,\mu}^{r}} .
\end{equation}
This means that
 $$W_{p,\mu}^r\subset B_\infty^r(L_{p,\mu}).$$
It can be seen that for $f\in B_\tau^r(L_{p,\mu})$, $0<\tau\le
\infty$,
\begin{equation}\label{2.1-0}
  E_{n}(f)_{p,\mu}\le 2^r n^{-r}\|f\|_{B_\tau^r(L_{p,\mu})}.
\end{equation}

We introduce a  metric $\rho$ on $\mathbb{B}^d$:
$$ \rho(x,y):= \arccos \Big( (x,y)+\sqrt{1-|x|^2}\sqrt{1-|y|^2}\Big).$$
For $r>0,\ x\in \mathbb{B}^d$ and a positive integer $n$,  we set
$${\bf B}_\rho(x,r):=\{y\in \mathbb{B}^d\ |  \ \rho(x,y)\le r\}.$$
For $\varepsilon\in(0,1)$, we say that a finite subset
$\Lambda\subset\mathbb{B}^d$ is maximal $\varepsilon$-separated if
$$\mathbb{B}^d\subset\bigcup\limits_{\oz\in\Lambda}{\bf B}_\rho(\oz,\varepsilon) \ {\rm\ and}\ \min\limits_{\oz\neq \oz^\prime}\rho(\oz,\oz^\prime)\ge \varepsilon.$$
Note that such a maximal $\varepsilon$-separated set $\Lambda$
exists and $\#\Lz\asymp \va^{-d}$, where $\# A$ denotes the number
of elements of a set $A$ (see \cite[Lemma 5.2]{PX}).

Finally, we give the Nikolskii inequalities on $\Bbb B^d$.

\begin{lem}\label{lem2.6-0}(\cite[Proposition 2.4]{KPX}) Let $ 1\le p,q\le \infty$ and $\mu\ge 0$. Then for  any $P\in \Pi_n^d$ we
have,
\begin{equation}\label{2.19}\|P\|_{q,\mu}\lesssim n^{(d+2\mu)(1/p-1/q)_+}\|P\|_{p,\mu}.\end{equation}\end{lem}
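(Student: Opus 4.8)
The plan is to reduce the inequality to a single endpoint estimate and then exploit the filtered reproducing kernel $K_{n,\eta}$ together with a pointwise bound on the Christoffel--Darboux kernel. When $q\le p$, since $w_\mu$ is a probability density, H\"older's inequality already gives $\|P\|_{q,\mu}\le\|P\|_{p,\mu}$ and $(1/p-1/q)_+=0$, so there is nothing to prove. Assume $q>p$. I would first establish the endpoint estimate
\[
\|P\|_{\infty,\mu}\lesssim n^{(d+2\mu)/p}\|P\|_{p,\mu}\qquad(P\in\Pi_n^d),\tag{$*$}
\]
and then recover the general case from the elementary interpolation
\[
\|P\|_{q,\mu}^q=\int_{\mathbb{B}^d}|P|^{q-p}|P|^p\,w_\mu\,dx\le\|P\|_{\infty,\mu}^{q-p}\|P\|_{p,\mu}^p,
\]
taking $q$-th roots and inserting $(*)$, which produces exactly the exponent $(d+2\mu)(1/p-1/q)$.

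To prove $(*)$ I would use the reproduction property (b), which gives $P=V_n(P)$ for $P\in\Pi_n^d$, so by \eqref{3.1} we have $P(x)=\langle P,K_{n,\eta}(x,\cdot)\rangle_\mu$, and H\"older's inequality yields $|P(x)|\le\|P\|_{p,\mu}\,\|K_{n,\eta}(x,\cdot)\|_{p',\mu}$, where $1/p+1/p'=1$. It therefore suffices to show $\sup_{x\in\mathbb{B}^d}\|K_{n,\eta}(x,\cdot)\|_{p',\mu}\lesssim n^{(d+2\mu)/p}$. I would obtain this by log-convexity of the $L^{p'}$ norms with respect to $w_\mu$, interpolating between the two endpoints $p'=1$ and $p'=\infty$. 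Property (c) already supplies the $L^1$ bound $\sup_x\|K_{n,\eta}(x,\cdot)\|_{1,\mu}\lesssim1$, while the $L^\infty$ endpoint is the pointwise kernel estimate $\sup_{x,y}|K_{n,\eta}(x,y)|\lesssim n^{d+2\mu}$. Combining them through $\|g\|_{p',\mu}\le\|g\|_{1,\mu}^{1/p'}\|g\|_{\infty,\mu}^{1/p}$ gives precisely the required bound.

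The remaining pointwise estimate $\sup_{x,y}|K_{n,\eta}(x,y)|\lesssim n^{d+2\mu}$ is the analytic core and the main obstacle. Since $\eta\ge0$, writing $K_{n,\eta}(x,y)=\sum_k\eta(\tfrac kn)\sum_{j=1}^{a_k^d}\phi_{kj}(x)\phi_{kj}(y)$ and applying the Cauchy--Schwarz inequality in the $(k,j)$ summation gives $|K_{n,\eta}(x,y)|\le K_{n,\eta}(x,x)^{1/2}K_{n,\eta}(y,y)^{1/2}$, so it is enough to control the diagonal $\sup_x K_{n,\eta}(x,x)$. Because $0\le\eta\le\chi_{[0,2]}$, one has $K_{n,\eta}(x,x)\le\sum_{k\le 2n}P_k(w_\mu;x,x)$, the diagonal of the reproducing kernel of $\Pi_{2n}^d$ in $L_{2,\mu}$, and the task is the Christoffel-function bound $\sup_x\sum_{k\le 2n}P_k(w_\mu;x,x)\lesssim n^{d+2\mu}$.

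I would establish this last bound from Xu's explicit closed form for $P_k(w_\mu;x,y)$ in terms of Gegenbauer polynomials together with the sharp sup-norm estimates for Jacobi polynomials. The delicate point, and the reason a naive dimension count ($\dim\Pi_{2n}^d\asymp n^d$) is insufficient, is that the extremal behaviour occurs on the boundary $|x|=1$, where the weight $(1-|x|^2)^{\mu-1/2}$ degenerates; this boundary layer is exactly what promotes the bulk order $n^d$ to $n^{d+2\mu}$, and obtaining the estimate \emph{uniformly} in $x$ up to the boundary is where the real work lies. Everything else in the argument is soft and follows from the properties of $V_L$ recorded above.
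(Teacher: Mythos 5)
Your argument is correct. Note that the paper does not actually prove this lemma --- it is imported verbatim from \cite[Proposition 2.4]{KPX} --- so there is no internal proof to compare against; relative to the cited source your route is genuinely different. Kyriazis--Petrushev--Xu work with sharp \emph{localized} estimates for kernels of the form \eqref{98-0}, namely $|K_{n,\eta}(x,y)|\lesssim n^{d}\bigl(W_\mu(n;x)W_\mu(n;y)\bigr)^{-1/2}(1+n\rho(x,y))^{-M}$ with $W_\mu(n;x)\asymp(1-|x|^2+n^{-2})^{\mu}$, which encodes the off-diagonal decay and the boundary degeneration simultaneously and is the engine for their whole Besov/Triebel--Lizorkin theory. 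You use much cruder global input: the uniform $L_{1,\mu}$ bound of property (c) and the single on-diagonal bound $\sup_{x}K_{n,\eta}(x,x)\lesssim n^{d+2\mu}$, glued together by log-convexity of the $L_{p'}$ norms, after the standard reductions (H\"older for $q\le p$, the endpoint $q=\infty$ followed by $\|P\|_{q,\mu}^q\le\|P\|_{\infty}^{q-p}\|P\|_{p,\mu}^p$, and the reproduction $P=V_n(P)$ from property (b)). This is weaker information but entirely sufficient for the Nikolskii inequality, and it makes the proof essentially self-contained given properties (b)--(c). The one step you defer to the literature, $\sup_{x}\sum_{k\le 2n}P_k(w_\mu;x,x)\lesssim n^{d+2\mu}$, does follow exactly as you indicate: Xu's compact formula writes $P_k(w_\mu;x,x)$ as $\frac{k+\lambda}{\lambda}$ times an average of $C_k^{\lambda}$ over arguments in $[-1,1]$ with $\lambda=\mu+\frac{d-1}{2}$, and $|C_k^{\lambda}(t)|\le C_k^{\lambda}(1)\asymp k^{2\lambda-1}$ gives $P_k(w_\mu;x,x)\lesssim k^{d+2\mu-1}$ \emph{uniformly} in $x$, whence summation over $k\le 2n$ yields the claim; so the boundary order $n^{d+2\mu}$ comes out automatically and this step is actually less delicate than your closing remarks suggest.
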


\section{Filtered  hyperinterpolation on the ball}

Let $\eta$ be a filter such that properties (a)-(e) hold. We want
to approximate  the inner product integral \eqref{3.1} of
$V_{L,\eta}(f)(x)$ by a positive quadrature  rule of polynomial
degree $3L$. Following \cite{SWo}, we shall call the resulting
operator  ``filtered hyperinterpolation".

For this purpose, we need  positive quadrature rules on $\Bbb
B^d$. For $L\in\Bbb N$, we assume that $Q_L(f):=
\sum_{\oz\in\Lz_L} \lz_\oz f(\oz)$ is a positive quadrature  rule
on $\Bbb B^d$ which is exact for $f\in \Pi_{3L}^d$, i.e., $\Lz_L$
is a finite subset of $\mathbb{B}^d$ with $\#\Lz_L\asymp L^d$,
weights $\lz_\oz>0,\ \oz\in \Lz_L$, satisfy, for all
$P\in\Pi_{3L}^d$,
$$\int _{\mathbb{B}^d}P(x)w_{\mu }(x)dx=
Q_{L}(P)=\sum_{\omega\in\Lambda _{L}}\lambda_{\omega}P(\omega).$$
Such positive quadrature rules exist. Indeed, for $L\in \Bbb N$,
it follows from \cite[Theorem 11.6.5]{DX} that for a given maximal
$\delta/L$-separated subset $\Lambda_L$ of $\Bbb B^d$ with
$\delta\in(0,\delta_0)$ for some $\delta_0>0$, there exists a
positive quadrature formula
\begin{equation*} \int _{\mathbb{B}^d}f(x)w_{\mu }(x)dx \approx Q_{L}(f):=\sum_{\omega\in\Lambda _{L}}\lambda_{\omega}f(\omega),\ \lambda _{\omega }>0,\end{equation*}
with $\lambda_{\omega}\asymp \int_{{\bf
B}_\rho(y,\delta/L)}w_\mu(x)dx$ and $\#\Lambda_L\asymp L^d$, which
is exact for $\Pi^d_{3L}$.

For the above positive quadrature formula $Q_L$, we can define the
discreted inner product $\langle \cdot,\cdot\rangle_{Q_L}$ on
$C(\Bbb B^d)$ by
\[\langle f,g\rangle_{Q_L} := Q_L(fg)=\sum _{\omega \in \Lambda_{L}}\lambda _{\omega }f(\omega )g(\oz).\]
The filtered hyperinterpolation operator is defined by
\begin{equation}\label{3.5}
G_{L}(f)(x):=\langle f, K_{L,\eta}(x,\cdot) \rangle_{Q_L}=\sum
_{\omega\in\Lambda_{L}}\lambda _{\omega}f(\oz)K_{L,\eta}(x,\oz).
\end{equation}
From \cite{WHLW} we know that
\begin{equation}\label{2.10}\|G_{L}\|=\sup_{x\in\mathbb{B}^d}\sum _{\omega\in\Lambda_{L}}\lambda
_{\omega}|K_{L,\eta}(x,{\omega})|\lesssim1.\end{equation}

The following theorem plays an important role in the proof of
upper estimates.

\begin{thm}\label{thm3}Let $1\leq p,q \leq \infty$, $r>(d+2\mu)/p$, $0<\tau\le\infty$, and $G_{L}$
   be given as in \eqref{3.5}. Then for all $ f\in X_p^r$, we have
\begin{equation}\label{3.8} \| f-G_{L}(f)\|_{q,\mu}\lesssim L^{-r+(d+2\mu)(\frac 1p-\frac1q)_+}\|f\|_{X_p^r},\end{equation}
where $X_p^r$ denotes $W_{p,\mu}^{r}$ or $B_\tau^r(L_{p,\mu})$.
\end{thm}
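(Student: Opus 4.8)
The plan is to combine the polynomial reproduction property of $G_L$ with the uniform bound \eqref{2.10} and a Marcinkiewicz--Zygmund type inequality. First I would record that, since $K_{L,\eta}(x,\cdot)\in\Pi_{2L-1}^d$ and $Q_L$ is exact on $\Pi_{3L}^d$, the operator $G_L$ reproduces low-degree polynomials: $G_L(P)=V_L(P)=P$ for every $P\in\Pi_L^d$. Fixing a near-best approximant $P\in\Pi_L^d$ with $\|f-P\|_{p,\mu}\lesssim E_L(f)_{p,\mu}$ and setting $g:=f-P$, reproduction gives $f-G_L(f)=g-G_L(g)$, so that
\begin{equation*}
\|f-G_L(f)\|_{q,\mu}\le\|g\|_{q,\mu}+\|G_L(g)\|_{q,\mu}.
\end{equation*}
For both $X_p^r=W_{p,\mu}^r$ and $X_p^r=B_\tau^r(L_{p,\mu})$ the Jackson estimates \eqref{2.1}--\eqref{2.1-0} give the single bound $E_n(f)_{p,\mu}\lesssim n^{-r}\|f\|_{X_p^r}$, so the two classes are handled simultaneously.

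For the first term I would use a dyadic telescoping $g=\sum_{j\ge J}\Delta_j$ with $2^J\asymp L$, where $\Delta_j:=P_{j+1}-P_j\in\Pi_{2^{j+1}}^d$ for near-best $L_{p,\mu}$-approximants $P_j$, so that $\|\Delta_j\|_{p,\mu}\lesssim E_{2^j}(f)_{p,\mu}\lesssim 2^{-jr}\|f\|_{X_p^r}$. Applying the Nikolskii inequality \eqref{2.19} to each $\Delta_j$ to pass from $L_{p,\mu}$ to $L_{q,\mu}$ and summing the resulting geometric series (which converges precisely because $r>(d+2\mu)/p$) yields $\|g\|_{q,\mu}\lesssim L^{-r+(d+2\mu)(\frac1p-\frac1q)_+}\|f\|_{X_p^r}$, the desired rate.

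The second term is where the real work lies. Interpolating the two endpoint bounds furnished by property (c) and \eqref{2.10}, namely $\|G_L(g)\|_{\infty,\mu}\lesssim\max_\omega|g(\omega)|$ and $\|G_L(g)\|_{1,\mu}\lesssim\sum_\omega\lambda_\omega|g(\omega)|$ (using $\sup_x\|K_{L,\eta}(x,\cdot)\|_{1,\mu}\lesssim1$), I would obtain, for all $1\le q\le\infty$, the discrete bound $\|G_L(g)\|_{q,\mu}\lesssim\big(\sum_{\omega\in\Lambda_L}\lambda_\omega|g(\omega)|^q\big)^{1/q}$. It then remains to control this discrete $\ell_q(\lambda)$-norm of the smooth error $g$ by the same rate. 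The crude estimate $\max_\omega|g(\omega)|$, or a global application of \eqref{2.19} to each dyadic block, is too lossy; the main obstacle is to establish a sub-ball Marcinkiewicz--Zygmund inequality
\begin{equation*}
\Big(\sum_{\omega\in\Lambda_L}\lambda_\omega|R(\omega)|^s\Big)^{1/s}\lesssim\big(1+\deg R/L\big)^{(d+2\mu)/s}\|R\|_{s,\mu},\qquad 1\le s\le\infty,
\end{equation*}
valid for all polynomials $R$, which I would prove from a local Nikolskii inequality on each cap ${\bf B}_\rho(\omega,\delta/L)$ together with the bounded overlap of these caps and the weight equivalence $\lambda_\omega\asymp\int_{{\bf B}_\rho(\omega,\delta/L)}w_\mu$. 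Applying this to each $\Delta_j$ (for $q\le p$ after first passing from $\ell_q(\lambda)$ to $\ell_p(\lambda)$ via the normalization $\sum_\omega\lambda_\omega=1$) and summing, the extra factor $(2^j/L)^{(d+2\mu)/s}$ is exactly what is needed for the geometric series to reproduce the target rate $L^{-r+(d+2\mu)(\frac1p-\frac1q)_+}\|f\|_{X_p^r}$, the convergence again using only $r>(d+2\mu)/p$. I expect verifying this sub-ball inequality, and in particular pinning down the local Nikolskii constant uniformly up to the boundary $|x|\to1$ where $w_\mu$ degenerates, to be the technically delicate step.
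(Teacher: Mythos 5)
Your argument is correct and follows essentially the same route as the paper: decompose $f-G_L(f)$ over dyadic blocks of near-best approximants, use polynomial reproduction of $G_L$ on $\Pi_L^d$, control the blocks with the Nikolskii inequality \eqref{2.19}, and control $G_L$ on polynomials of degree $2^j>L$ via a Marcinkiewicz--Zygmund inequality carrying the growth factor $(2^j/L)^{(d+2\mu)/s}$. The sub-ball MZ inequality you single out as the delicate step is exactly the paper's Lemma \ref{lem3.4} (estimate \eqref{2.8}), which is quoted from \cite{WHLW} and \cite{LW} rather than reproved; the paper packages its application as Lemma \ref{lem2.6} on $\|G_L(P)\|_{p,\mu}$ and then applies Nikolskii to $G_L(f_k)\in\Pi_{2L-1}^d$, whereas you apply MZ at exponent $q$ directly to the discrete norm --- the two bookkeepings yield the same rate.
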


\begin{rem}When $1\le p=q\le \infty$ and $X_p^r=W_{p,\mu}^r$, \eqref{3.8} was obtained by Li in \cite{Li}. \end{rem}

In order to prove Theorem \ref{thm3}, we need the following two
lemmas.

\begin{lem}\label{lem3.4}(\cite[Theorem 3.1]{WHLW} and \cite[Lemma 2.2]{LW})
Suppose that $n\in\Bbb N$, $\Omega_n$ is a finite subset of
$\mathbb{B}^d$, and $\{\mu_{\omega
}:\,\omega\in\Omega_n\}$ is a set of positive numbers. If there exists a $p_{0}\in (0,\infty )$ such that
for any $f\in \Pi _{n}^{d}$,
\begin{equation}\label{2.6}
  \sum _{\omega \in \Omega_n}\mu_{\omega }| f(\omega )|^{p_{0}}\lesssim
\int_{\mathbb{B}^d}| f(x)|^{p_{0}}w_{\mu }(x)dx,
\end{equation}
then the following regularity
condition
\begin{equation}\label{2.7} \sum _{\omega \in
\Omega_n\cap {\bf B}_\rho(y,\frac{1}{n})}\mu_{\omega }
 \lesssim \int_{{\bf B}_\rho(y,\frac{1}{n})}w_\mu(x)dx,\ {\rm for\ any}\ y\in\Bbb B^d,\end{equation}holds.

Conversely, if the regularity condition \eqref{2.7} holds,
then for any $1\le p<\infty$, $m\in\Bbb N$, $m\geq n$, $f\in \Pi _{m}^{d}$, we have
\begin{equation}\label{2.8}\sum _{\omega \in \Omega_n}\mu_{\omega}\left|f(\omega)\right |^{p}\lesssim
(\frac{m}{n})^{d+2\mu}\int_{\mathbb{B}^d}\left| f(x) \right
|^{p}w_{\mu }(x)dx.\end{equation}
\end{lem}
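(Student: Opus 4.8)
The statement is a two-way characterization: inequality \eqref{2.6} (a Marcinkiewicz--Zygmund inequality for a single exponent $p_0$ on $\Pi_n^d$) forces the geometric regularity \eqref{2.7}, and conversely \eqref{2.7} yields the whole family of Marcinkiewicz--Zygmund inequalities \eqref{2.8} with the degree-loss factor $(m/n)^{d+2\mu}$. Both implications rest on the same facts from harmonic analysis on the ball, which I would take as known: the weight $w_\mu\,dx$ is doubling for the metric $\rho$, with the explicit volume estimate $w_\mu({\bf B}_\rho(y,r)):=\int_{{\bf B}_\rho(y,r)}w_\mu(x)\,dx\asymp r^d\,(r+\sqrt{1-|y|^2})^{2\mu}$ for $0<r\le\pi$; from any maximal $\varepsilon$-separated set the balls $\{{\bf B}_\rho(\omega,\varepsilon)\}$ cover $\mathbb{B}^d$ while the dilated balls $\{{\bf B}_\rho(\omega,A\varepsilon)\}$ have overlap bounded in terms of $A$ and $d$; and for each $y$ there is a needle polynomial $T_y\in\Pi_n^d$ with $T_y(x)\ge c>0$ on ${\bf B}_\rho(y,1/n)$ and fast decay $|T_y(x)|\le c_k(1+n\rho(x,y))^{-k}$ for every $k$.

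For the forward implication I would test \eqref{2.6} on $f=T_y$. Choosing the decay order $k$ large, the fast decay together with the doubling estimate gives $\int_{\mathbb{B}^d}|T_y|^{p_0}w_\mu\lesssim w_\mu({\bf B}_\rho(y,1/n))$, since the annular pieces at distance $\asymp 2^j/n$ carry mass $\lesssim 2^{j(d+2\mu)}w_\mu({\bf B}_\rho(y,1/n))$ while $|T_y|^{p_0}\lesssim 2^{-jkp_0}$ there, so the series converges. On the left side only the nodes inside ${\bf B}_\rho(y,1/n)$ are kept, where $|T_y(\omega)|^{p_0}\ge c^{p_0}$, so $\sum_{\omega\in\Omega_n\cap{\bf B}_\rho(y,1/n)}\mu_\omega\lesssim\int_{\mathbb{B}^d}|T_y|^{p_0}w_\mu\lesssim w_\mu({\bf B}_\rho(y,1/n))$, which is exactly \eqref{2.7}.

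For the converse I would fix a maximal $(1/n)$-separated set $\{y_j\}$, so the balls $B_j:={\bf B}_\rho(y_j,1/n)$ cover $\mathbb{B}^d$ with bounded overlap, assign each $\omega\in\Omega_n$ to one containing $B_j$, and write $\sum_{\omega}\mu_\omega|f(\omega)|^p\le\sum_j\big(\sup_{B_j}|f|^p\big)\sum_{\omega\in\Omega_n\cap B_j}\mu_\omega$. The regularity \eqref{2.7} bounds the inner sum by $w_\mu(B_j)$. The crux is a local sup (Christoffel-function) estimate for $f\in\Pi_m^d$: on every ball ${\bf B}_\rho(z,1/m)$ of the natural scale $1/m$ one has $\sup_{{\bf B}_\rho(z,1/m)}|f|^p\lesssim w_\mu({\bf B}_\rho(z,1/m))^{-1}\int_{{\bf B}_\rho(z,A/m)}|f|^p w_\mu$. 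Covering the larger ball $B_j$ by scale-$1/m$ balls ${\bf B}_\rho(z_i,1/m)$ and taking the worst one, I would obtain $w_\mu(B_j)\sup_{B_j}|f|^p\lesssim \max_i \tfrac{w_\mu(B_j)}{w_\mu({\bf B}_\rho(z_i,1/m))}\int_{{\bf B}_\rho(z_i,A/m)}|f|^p w_\mu$. The volume estimate controls the ratio $w_\mu(B_j)/w_\mu({\bf B}_\rho(z_i,1/m))$ by $(m/n)^{d+2\mu}$ uniformly, and since ${\bf B}_\rho(z_i,A/m)\subset{\bf B}_\rho(y_j,(1+A)/n)$ each integral is taken over a fixed dilate of $B_j$. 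Summing over $j$ and using the bounded overlap of $\{{\bf B}_\rho(y_j,(1+A)/n)\}$ collapses the right-hand side to $(m/n)^{d+2\mu}\int_{\mathbb{B}^d}|f|^p w_\mu$, which is \eqref{2.8}.

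The main obstacle is the converse, and within it the local sup estimate together with the sharp bookkeeping of the exponent. Tracking $w_\mu(B_j)/w_\mu({\bf B}_\rho(z_i,1/m))$ through the volume formula splits into the interior regime $\sqrt{1-|y_j|^2}\gtrsim 1/n$, where the ratio is $\asymp(m/n)^d$, and the boundary regime $\sqrt{1-|y_j|^2}\lesssim 1/n$, where the factor $(r+\sqrt{1-|y|^2})^{2\mu}$ contributes an extra $(m/n)^{2\mu}$; it is precisely the combination of the two regimes that produces the exponent $d+2\mu$ rather than merely $d$. Establishing the local sup estimate itself (equivalently the two-sided Christoffel-function bound $\lambda_m(w_\mu,x)\asymp w_\mu({\bf B}_\rho(x,1/m))$ and the near-constancy of polynomials on scale $1/m$) is the one genuinely analytic input, which I would quote from the harmonic analysis on the ball rather than reprove.
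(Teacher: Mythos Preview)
The paper does not supply its own proof of this lemma; it is quoted from \cite{WHLW} and \cite{LW}. Your argument is correct and is essentially the standard route taken in those sources: for the forward direction one tests \eqref{2.6} on a localized needle polynomial $T_y\in\Pi_n^d$ and uses its fast off-diagonal decay together with the doubling estimate $w_\mu({\bf B}_\rho(y,r))\asymp r^d(r+\sqrt{1-|y|^2})^{2\mu}$ to extract \eqref{2.7}; for the converse one covers $\mathbb{B}^d$ by balls $B_j$ of radius $1/n$, controls $\sum_{\omega\in\Omega_n\cap B_j}\mu_\omega$ via \eqref{2.7}, and invokes the local Christoffel-function (sup-norm) bound for $f\in\Pi_m^d$ on balls of radius $1/m$ to produce the factor $w_\mu(B_j)/w_\mu({\bf B}_\rho(z,1/m))\lesssim (m/n)^{d+2\mu}$, after which bounded overlap of the dilated balls finishes the sum. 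Your identification of the boundary regime $\sqrt{1-|y_j|^2}\lesssim 1/n$ as the source of the extra $(m/n)^{2\mu}$ is exactly right; in the interior the ratio is only $\asymp(m/n)^d$, and the stated exponent $d+2\mu$ is the uniform worst case.
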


\begin{lem}\label{lem2.6} Let $1\le p\le\infty$, and $L\in\Bbb N$. Then for any $N\ge L$ and $P\in\Pi_N^d$, we have
\begin{equation}\label{3.19} \|G_{L}(P)\|_{p,\mu}\lesssim(\frac{N}{L})^{\frac{d+2\mu}{p}}\|P\|_{p,\mu}.
\end{equation}
\end{lem}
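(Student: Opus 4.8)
The plan is to reduce \eqref{3.19} to a Marcinkiewicz--Zygmund inequality for the quadrature nodes $\Lambda_L$ by means of a Schur-type test, handling the endpoint $p=\infty$ separately. For $p=\infty$ the bound is immediate: from the definition \eqref{3.5} and the uniform kernel estimate \eqref{2.10},
$|G_{L}(P)(x)|\le \|P\|_{\infty,\mu}\sum_{\omega\in\Lambda_L}\lambda_\omega|K_{L,\eta}(x,\omega)|\le \|G_L\|\,\|P\|_{\infty,\mu}\lesssim \|P\|_{\infty,\mu}$,
which is \eqref{3.19} in the case $p=\infty$, since then the exponent $(d+2\mu)/p$ vanishes. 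So the substance lies in the range $1\le p<\infty$.

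For $1\le p<\infty$ I would run a Schur test on the synthesis sum. Starting from $|G_{L}(P)(x)|\le\sum_\omega\lambda_\omega|P(\omega)|\,|K_{L,\eta}(x,\omega)|$, I split the weight as $\lambda_\omega|K_{L,\eta}(x,\omega)|=(\lambda_\omega|K_{L,\eta}(x,\omega)|)^{1/p'}(\lambda_\omega|K_{L,\eta}(x,\omega)|)^{1/p}$ (with $1/p+1/p'=1$, the factor being $1$ when $p=1$) and apply H\"older in $\omega$. The first factor contributes $\bigl(\sum_\omega\lambda_\omega|K_{L,\eta}(x,\omega)|\bigr)^{1/p'}\le\|G_L\|^{1/p'}\lesssim 1$ uniformly in $x$ by \eqref{2.10}, leaving the pointwise bound $|G_{L}(P)(x)|^p\lesssim\sum_\omega\lambda_\omega|P(\omega)|^p|K_{L,\eta}(x,\omega)|$. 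Integrating against $w_\mu$ and interchanging sum and integral gives $\|G_{L}(P)\|_{p,\mu}^p\lesssim\sum_\omega\lambda_\omega|P(\omega)|^p\,\|K_{L,\eta}(\omega,\cdot)\|_{1,\mu}\lesssim\sum_\omega\lambda_\omega|P(\omega)|^p$, where the last step uses the symmetry of the reproducing kernel together with property (c).

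It then remains to dominate $\sum_{\omega\in\Lambda_L}\lambda_\omega|P(\omega)|^p$ by $(N/L)^{d+2\mu}\|P\|_{p,\mu}^p$, which is exactly the converse direction of Lemma \ref{lem3.4} with $n=L$ and $m=N\ge L$. To invoke it I must first verify its hypothesis \eqref{2.6} with $p_0=2$ and $\Omega_n=\Lambda_L$: since $Q_L$ is exact on $\Pi_{3L}^d$ and $|f|^2\in\Pi_{2L}^d\subset\Pi_{3L}^d$ for $f\in\Pi_L^d$, one obtains the exact identity $\sum_\omega\lambda_\omega|f(\omega)|^2=\|f\|_{2,\mu}^2$, so \eqref{2.6} holds and hence the regularity condition \eqref{2.7} holds at scale $n=L$. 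Lemma \ref{lem3.4} then yields $\sum_\omega\lambda_\omega|P(\omega)|^p\lesssim(N/L)^{d+2\mu}\|P\|_{p,\mu}^p$ for every $P\in\Pi_N^d$, and combined with the previous paragraph this gives \eqref{3.19}. I expect the main obstacle to be precisely this last step: extracting the sharp power $(N/L)^{(d+2\mu)/p}$ hinges on applying the Marcinkiewicz--Zygmund inequality of Lemma \ref{lem3.4} at the correct resolution $1/L$ and carefully tracking the degree $N$ of $P$ through the converse estimate, whereas the Schur test itself is routine once the uniform bounds \eqref{2.10} and property (c) are in hand.
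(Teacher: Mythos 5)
Your proof is correct and follows essentially the same route as the paper's: the $p=\infty$ case via the uniform bound \eqref{2.10}, a H\"older/Schur splitting of $\lambda_\omega|K_{L,\eta}(x,\omega)|$ reducing matters to $\sum_\omega\lambda_\omega|P(\omega)|^p$, and then the Marcinkiewicz--Zygmund estimate of Lemma \ref{lem3.4} (with the hypothesis \eqref{2.6} verified at $p_0=2$ from exactness of $Q_L$ on $\Pi_{3L}^d$). The only cosmetic difference is that the paper treats $p=1$ as a separate case, whereas you absorb it into the H\"older argument with $p'=\infty$, which is equally valid.
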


\begin{proof}
Our proof will be divided into three cases.

\noindent\emph{Case 1:} $p=\infty$.

In this case, by \eqref{2.10} we have for $N\geq L$ and $P\in
\Pi_N^d$,
\begin{align*}
  \|G_{L}(P)\|_{\infty}\le \|G_L\|
 \|P\|_{\infty}\lesssim \|P\|_{\infty}.
\end{align*}

\noindent\emph{Case 2:} $p=1$.

In this case, since $Q_L$ is a positive quadrature  rule which is
exact for $\Pi_{3L}^d$, then \eqref{2.6} is true for
$\{\lambda_\omega\}_{\omega\in\Lambda_L}$ with $p_0=2$. This leads
that the regular condition \eqref{2.7} holds. By property (c) and
Lemma \ref{lem3.4} we obtain for $N\geq L$ and $P\in \Pi_N^d$,
\begin{align*}
 \|G_{L}(P)\|_{1,\mu}&= \| \sum\limits
_{\omega \in \Lambda _{L}}\lambda _{\omega }P({\omega
})K_{L,\eta }(\cdot,{\omega })\|_{1,\mu}\\&\leq \sum\limits _{\omega \in
\Lambda _{L}}\lambda _{\omega } |P({\omega })| \|
K_{L,\eta } (\cdot,{\omega }) \|_{1,\mu}\\&\lesssim \sum\limits _{\omega \in
\Lambda _{L}}\lambda _{\omega } |P({\omega })|\lesssim (\frac{N}{L})^{{d+2\mu}}\|P\|_{1,\mu}.
\end{align*}

\noindent\emph{Case 3:} $1<p< \infty$.

In this case, for $N\geq L$ and $P\in \Pi_N^d$, by the H\"{o}lder inequality,
\eqref{2.10}, property (c), and Lemma \ref{lem3.4}, we obtain
\begin{align*}
\|G_{L}(P)\|_{p,\mu}^{p}&=\int
_{\mathbb{B}^d}\Big|\sum \limits_{\omega\in\Lambda_{L}}\lambda
_{\omega}P({\omega})K_{L,\eta}(x,\oz)
\Big|^{p}w_{\mu}(x)dx\\&\leq \int _{\mathbb{B}^d}\Big( \sum
\limits_{\omega \in \Lambda_{L}}\lz_{\oz}|P(\omega )
|| K_{L,\eta}(x,{\omega})|\Big)^{p}w_{\mu }(x)dx
\\&\leq\int _{\mathbb{B}^d}
\Big(\sum\limits_{\omega \in \Lambda _{L}}\lambda_{\omega }
|P({\omega})|^{p}|K_{L,\eta }(x,{\omega })|\Big )\Big(\sum\limits _{\omega \in \Lambda _{L}}
\lambda_{\omega }|K_{L,\eta}(x,{\omega})|\Big)^{p-1}w_{\mu
}(x)dx
\\&\le
\Big(\sum\limits _{\omega \in \Lambda _{L}}\lz_{\oz}|P({\omega })
|^{p} \| K_{L,\eta }(\cdot, {\omega })
\|_{1,\mu}\Big)\Big(\sup_{x\in\mathbb{B}^d}\sum\limits _{\omega
\in \Lambda _{L}} \lambda_{\omega } |K_{L,\eta }(x,{\omega
})|\Big)^{{p-1}}
\\&\lesssim \sum\limits_{\omega \in \Lambda
_{L}}\lambda _{\omega } |P({\omega })|^{p}
\lesssim (\frac{N}{L})^{d+2\mu}\|P\|_{p,\mu}^p,
\end{align*}
which proves \eqref{3.19}.

Lemma \ref{lem2.6} is proved.\end{proof}

Now we turn to prove Theorem \ref{thm3}.

\vskip 2mm

\noindent{\it Proof of Theorem \ref{thm3}.}

Since  $X_{p}^r$ is continuously embedded into
$B_\infty^r(L_{p,\mu})$, it suffices to show Theorem \ref{thm3}
for  $B_\infty^r(L_{p,\mu})$.

Suppose that $m$ is an integer satisfying $2^{m}\leq L< 2^{m+1}$.
Let $1\le p,q\le \infty,\ r> (d+2\mu)/p$,
 and $f\in B_\infty^r(L_{p,\mu})$. Define
$g_{2^k}\in\Pi_{2^k}^d$ by
$$E_{2^k}(f)_{p,\mu}=\|f-g_{2^k}\|_{p,\mu},$$ and let $f_k=g_{2^k}-g_{2^{k-1}}$ for $k\ge0$, where we set $g_{2^{-1}}=0$. Note that since  $\Pi_{2^k}^d$
are the finite dimensional linear spaces, the best approximant
 polynomials $g_{2^{k}}$  always exist (see, for instance,
\cite[p. 17, Theorem 1]{L}). It can be seen that
$f_k\in\Pi_{2^k}^d$, and the series $\sum\limits_{k=j+1}^\infty
f_k$ converges to $f-g_{2^j}$ in the uniform norm for each
$j\ge-1$. We have
\begin{align}\label{3.18-0}
  \|f-G_{L}(f)\|_{q,\mu}&=
 \|\sum _{k=m+1}^{\infty }(f_k-G_{L}(f_k))\|_{q,\mu}\nonumber\\&\leq\sum_{k=m+1}^{\infty }
 \|f_k-G_{L}(f_k)\|_{q,\mu}\nonumber
 \\&\leq
\sum_{k=m+1}^{\infty }\|f_k\|_{q,\mu}+ \sum_{k=m+1}^{\infty }\|
G_{L}(f_k)\|_{q,\mu}.
\end{align}
It follows from  the Nikolskii
inequality \eqref{2.19} and \eqref{2.1-0} we have
\begin{align}\label{3.10}
\|f_k\|_{q,\mu}&\lesssim 2^{k(d+2\mu)(\frac 1p-\frac1q)_+}\|f_k\|_{p,\mu}\nonumber\\&
\le2^{k(d+2\mu)(\frac 1p-\frac1q)_+}(\|f-g_{2^k}\|_{p,\mu}+\|f-g_{2^{k-1}}\|_{p,\mu})\notag\\& \lesssim 2^{k(d+2\mu)(\frac 1p-\frac1q)_+}E_{2^{k-1}}(f)_{p,\mu}\nonumber
\\&\lesssim 2^{-k(r-(d+2\mu)(\frac 1p-\frac1q)_+)}\|f\|_{B_\infty^r(L_{p,\mu})} .
\end{align}
This means that
\begin{align}\label{3.11-1}
\sum_{k=m+1}^\infty \|f_k\|_{q,\mu}&\lesssim \sum_{k=m+1}^\infty
2^{-k(r-(d+2\mu)(\frac
1p-\frac1q)_+)}\|f\|_{B_\infty^r(L_{p,\mu})} \notag\\ &\asymp
2^{-m(r-(d+2\mu)(\frac
1p-\frac1q)_+)}\|f\|_{B_\infty^r(L_{p,\mu})}\notag\\ &\asymp
L^{-r+(d+2\mu)(\frac 1p-\frac1q)_+}\|f\|_{B_\infty^r(L_{p,\mu})}.
\end{align}
Applying \eqref{3.10} and  Lemma \ref{lem2.6}, we get for $k\ge
m+1$,
\begin{align*} \|G_{L}(f_k)\|_{q,\mu}&\lesssim L^{(d+2\mu)(\frac 1p-\frac1q)_+}\|G_{L}(f_k)\|_{p,\mu}\notag\\ &\lesssim  L^{(d+2\mu)(\frac 1p-\frac1q)_+}
(\frac{2^{k}}{L})^{\frac{d+2\mu}{p}}
 \|f_k\|_{p,\mu}\\ &\lesssim  L^{(d+2\mu)(\frac 1p-\frac1q)_+}
(\frac{2^{k}}{L})^{\frac{d+2\mu}{p}}
2^{-kr}\|f\|_{B_\infty^r(L_{p,\mu})}.
\end{align*}
It follows that
\begin{align}\label{3.19-0}
\sum_{k=m+1}^\infty \|G_{L}(f_k)\|_{q,\mu}&\lesssim
\sum_{k=m+1}^\infty  L^{(d+2\mu)(\frac 1p-\frac1q)_+}
(\frac{2^{k}}{L})^{\frac{d+2\mu}{p}}2^{-kr} \|f\|_{B_\infty^r(L_{p,\mu})} \notag\\
&\lesssim L^{(d+2\mu)(\frac
1p-\frac1q)_+-\frac{d+2\mu}p}\sum_{k=m+1}^\infty
2^{-k(r-\frac{d+2\mu}p)}\|f\|_{B_\infty^r(L_{p,\mu})}\notag\\
&\asymp L^{-r+(d+2\mu)(\frac
1p-\frac1q)_+}\|f\|_{B_\infty^r(L_{p,\mu})}.
\end{align}
Hence, for  $f\in B_\infty^r(L_{p,\mu})$, by \eqref{3.18-0},
\eqref{3.11-1}, and \eqref{3.19-0} we have
\begin{align*}\|f-G_{L}(f)\|_{q,\mu}
\lesssim L^{-r+(d+2\mu)(\frac 1p-\frac1q)_+}\|
f\|_{B_\infty^r(L_{p,\mu})}.\end{align*}

This completes the proof of Theorem \ref{thm3}. $\hfill\Box$

\

Next we show that the filtered hyperinterpolation operators  $G_L$
are order optimal in sense of the optimal recovery in some cases.
Let $F_d$ be a class of continuous functions on $D_d$, and
$(X,\|\cdot\|_X)$ be a normed linear space of functions on $D_d$,
where $D_d$ is a subset of $\Bbb R^d$. For $n\in \Bbb N$, the
sampling numbers (or the optimal recovery) of $F_d$ in $X$ are
defined by
$$g_n(F_d,X) \ :=\inf_{\sub{\xi_1,\dots,\xi_n\in D_d\\ \varphi:\ \Bbb R^n\rightarrow X}}
\sup_{f\in F_d}\|f-\varphi(f(\xi_1),\dots,f(\xi_n))\|_X,
$$where the infimum is taken over all $n$ points
$\xi_1,\dots,\xi_n$ in $D_d$ and all  mappings $\varphi$ from
$\Bbb R^n$ to $X$. If in the above definition, the infimum is
taken over all $n$ points $\xi_1,\dots,\xi_n$ in $D_d$ and all
linear mappings $\varphi$ from $\Bbb R^n$ to $X$, we obtain  the
definition of the linear sampling numbers $g_n^{\rm lin}(F_d,X)$.

It is well known (see \cite{TWW}) that for a balanced convex set
$F_d$,
\begin{equation}\label{3.16}
  g_n^{\rm lin}(F_d,X)\ge g_n(F_d,X)\ge\inf_{\sub{\xi_1,\dots,\xi_n\in D_d}}\sup_{\sub{f\in F_d\\
  f(\xi_1)=\dots=f(\xi_n)=0}}\|f\|_X.
\end{equation}

\begin{thm} \label{thm3.5}Let $1\le q\le p\le\infty$, $0<\tau\le\infty$, and
$r>(d+2\mu)/p$. Then we have
\begin{equation}\label{3.16-0}
  g_n^{\rm lin}(BX_p^{r},L_{q,\mu })
  \asymp g_n(BX_p^{r},L_{q,\mu})\asymp n^{-r/d},
\end{equation}where $X_p^r$ denotes $W_{p,\mu}^{r}$ or $B_\tau^r(L_{p,\mu})$.

\end{thm}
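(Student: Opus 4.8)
The plan is to establish the two-sided bound in \eqref{3.16-0} by proving a matching upper estimate via the filtered hyperinterpolation operators and a lower estimate via a bump-function construction, using the sandwich inequality \eqref{3.16}. Since $BX_p^r$ is a balanced convex set, \eqref{3.16} applies, so it suffices to prove $g_n^{\rm lin}(BX_p^r, L_{q,\mu}) \lesssim n^{-r/d}$ and the lower bound $\inf_{\xi_1,\dots,\xi_n}\sup\{\|f\|_{q,\mu} : f\in BX_p^r,\ f(\xi_i)=0\} \gtrsim n^{-r/d}$.

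For the upper estimate, I would take the linear operator $G_L$ from \eqref{3.5} as the recovery map. It samples $f$ at the $\#\Lambda_L \asymp L^d$ points of $\Lambda_L$, so choosing $L \asymp n^{1/d}$ gives cardinality $\lesssim n$. Applying Theorem \ref{thm3} in the regime $q \le p$ (so that $(1/p - 1/q)_+ = 0$) yields directly
\begin{equation*}
\|f - G_L(f)\|_{q,\mu} \lesssim L^{-r}\|f\|_{X_p^r} \lesssim n^{-r/d}
\end{equation*}
uniformly over $f \in BX_p^r$. Since $G_L$ is linear and uses at most $n$ function values, this bounds $g_n^{\rm lin}(BX_p^r, L_{q,\mu})$ from above by $n^{-r/d}$, handling both $X_p^r = W_{p,\mu}^r$ and $X_p^r = B_\tau^r(L_{p,\mu})$ at once because Theorem \ref{thm3} covers both.

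For the lower estimate I would use the rightmost quantity in \eqref{3.16}. Given any $n$ sampling points $\xi_1,\dots,\xi_n$, I would exhibit a single test function $f$ in $BX_p^r$ (up to a constant) that vanishes at all $\xi_i$ yet has $\|f\|_{q,\mu} \gtrsim n^{-r/d}$. The natural device is a smooth bump supported on a small metric ball ${\bf B}_\rho(x_0, c\, n^{-1/d})$ that avoids all sampling points: since a ball of radius $\asymp n^{-1/d}$ has $w_\mu$-measure $\asymp n^{-1}$ and the $n$ points can block only $O(n)$ such balls, a pigeonhole argument over a maximal $c\,n^{-1/d}$-separated set (which has $\asymp n$ elements) guarantees one admissible ball. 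On it I place a localized polynomial-type bump $f$ of the correct scale; a scaling computation shows $\|f\|_{q,\mu} \asymp n^{-r/d}\|f\|_{X_p^r}$, so after normalization $f \in BX_p^r$ gives the claimed lower bound. Because $W_{p,\mu}^r \subset B_\infty^r(L_{p,\mu})$ and the classes are nested, it suffices to construct the bump for the smallest smoothness norm, which then dominates.

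The main obstacle will be the lower-bound bump construction: producing a function that is genuinely localized in the metric $\rho$, has controlled weighted Sobolev/Besov norm $\|f\|_{X_p^r} \lesssim 1$, and simultaneously has weighted $L_{q,\mu}$-norm of the exact order $n^{-r/d}$. The difficulty is compounded by the Jacobi weight $w_\mu$, whose density degenerates near the boundary $|x|=1$, so the measure of $\rho$-balls and the behavior of the bump must be estimated uniformly in the center location, including near the boundary. I would control this either by transplanting a standard Euclidean bump through the metric $\rho$ and invoking the equivalence of the weighted Besov norm with a modulus-of-smoothness definition (as referenced via \cite{KPX}), or by borrowing an existing localized test-function construction from the deterministic lower bounds underlying \eqref{1.3} and \eqref{1.4}; verifying that a single bump (rather than a sum of many) already forces the order $n^{-r/d}$ is the delicate point, since one must check the smoothness norm does not blow up as the support shrinks faster than the target $L_{q,\mu}$-norm.
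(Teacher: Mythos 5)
Your upper bound is exactly the paper's: take $L\asymp n^{1/d}$, use $G_L$ (which samples at $\#\Lambda_L\lesssim n$ points), and apply Theorem \ref{thm3} with $(1/p-1/q)_+=0$. That part is fine.

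The lower bound has a genuine gap, and it is precisely the point you flag as "delicate": a \emph{single} normalized bump does not force the order $n^{-r/d}$ when $q<p$. Concretely, for a bump $\varphi_j$ at scale $1/m$ with $m\asymp n^{1/d}$ one has $\|\varphi_j\|_{q,\mu}\asymp m^{-d/q}$ and $\|\varphi_j\|_{X_p^r}\lesssim m^{r-d/p}$ (this is \eqref{5.1} and Lemma \ref{lem44} of the paper), so after normalization in $X_p^r$ the $L_{q,\mu}$-norm is of order $m^{-r+d/p-d/q}=n^{-r/d-(1/q-1/p)}$, which is strictly smaller than $n^{-r/d}$ for $q<p$. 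The obstruction is structural: a function concentrated on a set of measure $\asymp n^{-1}$ pays a penalty of $n^{-(1/q-1/p)}$ when its $L_q$-norm is compared to its $L_p$-norm, so no choice of bump profile or treatment of the Jacobi weight can rescue the single-bump argument. The paper avoids this by going in the opposite direction: it takes $N$ with $\dim\Pi_N^d\ge 2n$, lets $X_0\subset\Pi_N^d$ be the subspace of polynomials vanishing at the $n$ sample points (of dimension $\ge\frac12\dim\Pi_N^d$), and invokes Lemma \ref{lem4.1} to get $g_0\in X_0$ with $\|g_0\|_{p_0,\mu}\asymp 1$ for \emph{all} $p_0$, i.e., a globally spread, "flat" function; then $f_0=N^{-r}g_0^2$ vanishes at the sample points, satisfies $\|f_0\|_{X_p^r}\lesssim 1$ via the Bernstein-type inequality \eqref{122}, and has $\|f_0\|_{q,\mu}=N^{-r}\|g_0\|_{2q,\mu}^2\asymp N^{-r}\asymp n^{-r/d}$ uniformly in $q$. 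If you want to stay with bumps, the repair is to sum $\asymp n$ of them: of $4n$ disjoint balls of radius $\asymp n^{-1/d}$ at least $3n$ contain no sample point, and $f=\sum_{j\in J}\varphi_j$ over those then has $\|f\|_{q,\mu}\asymp(|J|m^{-d})^{1/q}\asymp 1$ while $\|f\|_{X_p^r}\lesssim m^{r-d/p}|J|^{1/p}\asymp m^r$ by Lemma \ref{lem44}, recovering the ratio $n^{-r/d}$; but as written, your single-bump construction does not prove the theorem for $q<p$.
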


In order to prove Theorem \ref{thm3.5} we need the following
lemma.

\begin{lem}\label{lem4.1}(\cite[Proposition 4.8]{DW})
Let $X$ be a linear subspace of \ $\Pi_N^d$ with ${\rm
dim}\,X\ge\varepsilon\,{\rm dim}\,\Pi_N^d$ for some
$\varepsilon\in(0,1)$. Then there exists a function $f\in X$ such
that $\|f\|_{p,\mu}\asymp1$ for all $0<p\le\infty$.
\end{lem}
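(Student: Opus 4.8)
The plan is to reduce everything to the single case $p=\infty$ — the construction of one \emph{flat} polynomial in $X$ — and then read off all exponents by elementary inequalities on the probability space $(\mathbb{B}^d,w_\mu\,dx)$. I claim it suffices to produce $f\in X$ with $\|f\|_{2,\mu}\asymp1$ and $\|f\|_{\infty}\lesssim1$, constants depending only on $\varepsilon,d,\mu$. Granting this: since $\int_{\mathbb{B}^d}w_\mu\,dx=1$, the map $p\mapsto\|f\|_{p,\mu}$ is nondecreasing, so for $2\le p\le\infty$ we get $1\asymp\|f\|_{2,\mu}\le\|f\|_{p,\mu}\le\|f\|_\infty\lesssim1$; for $0<p<2$ the upper bound is again monotonicity, and the lower bound follows from $\|f\|_{2,\mu}^2\le\|f\|_\infty^{2-p}\|f\|_{p,\mu}^p$, i.e.\ $\|f\|_{p,\mu}\gtrsim\|f\|_\infty^{-(2-p)/p}$, a positive constant for each fixed $p$. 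Hence the whole lemma rests on the flat element.

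Next I would discretize. Writing $N$ for the degree and $d_X:=\dim X\ge\varepsilon\dim\Pi_N^d\asymp\varepsilon N^d$, fix a maximal $\tfrac{c}{N}$-separated set $\Lambda$ with the positive quadrature of Section~\ref{sec2} (exact on $\Pi_{3N}^d$), so $\#\Lambda\asymp N^d$, $\lambda_\omega\asymp\int_{{\bf B}_\rho(\omega,1/N)}w_\mu$, and, for every $P\in\Pi_N^d$, both $\|P\|_{2,\mu}^2=\sum_{\omega}\lambda_\omega|P(\omega)|^2$ (quadrature exactness) and $\|P\|_\infty\asymp\max_{\omega\in\Lambda}|P(\omega)|$ (a Bernstein/Marcinkiewicz--Zygmund sampling inequality, valid for $c$ small). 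Flatness is thereby equivalent to finding $f\in X$ whose node-value vector $v=(f(\omega))_{\omega\in\Lambda}$ has $\|v\|_{\ell_\infty}\lesssim1$ and $\sum_\omega\lambda_\omega v_\omega^2\gtrsim1$.

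\textbf{The main obstacle} is that the Jacobi weight makes this finite-dimensional problem non-uniform: known Christoffel-function estimates give $K_{\Pi_N}(x,x)\asymp N^d$ in the bulk but as large as $\asymp N^{d+2\mu}$ in the $1/N$-collar of $\partial\mathbb{B}^d$, and since $K_X(x,x)\le K_{\Pi_N}(x,x)$ a \emph{generic} element of $X$ is forced to be large near the boundary; one must exploit the freedom of choosing $f$. I would tame this by passing to the bulk. Fix a small $\beta=\beta(\varepsilon)$, let $C_\beta=\{x:\rho(x,\partial\mathbb{B}^d)<\beta\}$, and put $V_0:=\{f\in X:\ f(\omega)=0\text{ for all }\omega\in\Lambda\cap C_\beta\}$. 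As $\#(\Lambda\cap C_\beta)\lesssim\beta N^d$, choosing $\beta$ a small multiple of $\varepsilon$ gives $\dim V_0\ge d_X-\#(\Lambda\cap C_\beta)\ge\tfrac12 d_X\gtrsim\varepsilon N^d$. Each $f\in V_0$ vanishes on the collar nodes, so $\|f\|_\infty\asymp\max_{\omega\in\Lambda\setminus C_\beta}|f(\omega)|$, and on the bulk nodes $\Lambda\setminus C_\beta$ (there being $M':=\#(\Lambda\setminus C_\beta)\asymp N^d$ of them) the weights are uniform, $\lambda_\omega\asymp N^{-d}$. Restricting to bulk node-values is injective on $V_0$ (a polynomial in $V_0$ vanishing on all of $\Lambda$ is $0$), so its image $W\subseteq\mathbb{R}^{M'}$ satisfies $\dim W=\dim V_0\ge\delta' M'$ with $\delta'\asymp\varepsilon$, and the task becomes the unweighted one: find $v\in W$ with $\|v\|_{\ell_\infty}\le1$ and $\|v\|_{\ell_2}\gtrsim\sqrt{M'}$.

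Finally I would solve this finite-dimensional, proportional-dimension problem \emph{without any logarithmic loss} by a convex-geometric argument rather than a probabilistic one: a random-sign vector in $W$ would only give $\|v\|_{\ell_\infty}\lesssim\sqrt{\log N}\,\|v\|_{\ell_2}/\sqrt{M'}$, which is too weak for $\|f\|_\infty\asymp1$. Instead, by Vaaler's cube-slicing inequality the central section $W\cap[-1,1]^{M'}$ has $(\dim W)$-dimensional volume at least $2^{\dim W}$; since a symmetric convex body of volume $\ge 2^k$ in $\mathbb{R}^k$ cannot lie in a Euclidean ball of radius $o(\sqrt k)$ (because $\operatorname{vol}(B_2^k)^{1/k}\asymp k^{-1/2}$), there is $v\in W$ with $\|v\|_{\ell_\infty}\le1$ and $\|v\|_{\ell_2}\gtrsim\sqrt{\dim W}\gtrsim\sqrt{\delta'}\,\sqrt{M'}$. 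Pulling back to $f\in V_0\subseteq X$ and using the sampling equivalences gives $\|f\|_\infty\lesssim1$ and $\|f\|_{2,\mu}\asymp\|v\|_{\ell_2}/\sqrt{M'}\gtrsim\sqrt{\varepsilon}$; after normalizing by $\|f\|_{2,\mu}$ this is the required flat element, and by the first paragraph it satisfies $\|f\|_{p,\mu}\asymp1$ for all $0<p\le\infty$. The two decisive ingredients are the bulk restriction, which removes the weight's boundary degeneracy, and Vaaler's inequality, which produces a flat vector in an arbitrary proportional subspace with no logarithmic factor.
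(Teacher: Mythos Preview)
The paper does not prove this lemma; it is quoted from \cite[Proposition 4.8]{DW} and invoked as a black box in the proof of Theorem~\ref{thm3.5}. So there is nothing in the present paper to compare against, and your argument stands as a self-contained proof. It is correct. The reduction in your first paragraph to a single flat element with $\|f\|_{2,\mu}\asymp 1$ and $\|f\|_\infty\lesssim 1$ is the right move; the discretization via a positive cubature exact on $\Pi_{3N}^d$ is exactly what the paper already supplies, and the $L_\infty$ sampling bound $\|P\|_\infty\lesssim\max_{\omega\in\Lambda}|P(\omega)|$ for $P\in\Pi_N^d$ indeed follows from $P=G_N(P)$ together with \eqref{2.10}; killing the collar nodes so that the surviving weights satisfy $\lambda_\omega\asymp N^{-d}$ is the correct way to neutralize the boundary degeneracy of $w_\mu$; and Vaaler's cube-slicing inequality is precisely the tool that yields a flat vector in an arbitrary subspace of proportional dimension without the logarithmic loss a random-sign construction would incur. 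One small caveat: after normalizing $\|f\|_{2,\mu}=1$ your lower bound for $0<p<2$ reads $\|f\|_{p,\mu}\ge C^{-(2-p)/p}$ with $C$ an upper bound for $\|f\|_\infty$, so the implicit constants in ``$\|f\|_{p,\mu}\asymp 1$'' degenerate as $p\to 0^+$. This is consistent with how the lemma is stated and used (in Theorem~\ref{thm3.5} only exponents $2p,2q\ge 2$ occur, where your bounds are uniform), but it is worth flagging.
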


\

\noindent{\it Proof of Theorem \ref{thm3.5}.}

Without loss of generality we assume that $n$ is sufficiently
large. We choose $L\in\Bbb N$ such that
$$\#\Lz_L\le n\ \ {\rm and} \ \ \#\Lz_L\asymp n.$$ It follows from the definition of $G_L$ and $g_n^{\rm
lin}(BX_p^{r},L_{q,\mu })$ that $$ g_n^{\rm
lin}(BX_p^r,L_{q,\mu})\le\sup_{f\in BX_p^r}\|f-G_L(f)\|_{q,\mu}.$$
By Theorem \ref{thm3} and the above inequlity we have
\begin{align}\label{111-1}g_n(BX_p^r,L_{q,\mu})\le g_n^{\rm lin}(BX_p^r,L_{q,\mu})\lesssim L^{-r}\asymp n^{-r/d}.\end{align}

Now we show the lower bound. Let $\xi_1,\dots,\xi_n$ be any $n$
distinct points on $\Bbb B^d$. Take a positive integer $N$ such
that $2n\le{\rm dim}\,\Pi_N^d\le Cn$, and denote
$$X_0:=\big\{g\in\Pi_N^d\ \big|\ g(\xi_j)=0\ {\rm for\ all}\ j=1,\dots,n\big\}.$$
Thus, $X_0$ is a linear subspace of $\Pi_N^d$ with $${\rm
dim}\,X_0\ge{\rm dim}\,\Pi_N^d-n\ge\frac12{\rm dim}\,\Pi_N^d.$$ It
follows from Lemma \ref{lem4.1} that there exists a function
$g_0\in X_0$ such that $$\|g_0\|_{p_0,\mu}\asymp1,\ {\rm}\ {\rm
for\ all}\ 0<p_0\le\infty.$$ Let $f_0(x)=N^{-r}(g_0(x))^2$ and
$m\in\Bbb N$ such that $2^{m-1}\le N<2^m$. Then by the fact that
$E_{2^j}(f_0)_{p,\mu}\le \|f_0\|_{p,\mu}$ we have
\begin{align*}
  \|f_0\|_{B_\tau^r(L_{p,\mu})} &=\|f_0\|_{p,\mu}+\Big(\sum\limits_{j=0}^{m+1}2^{jr\tau} E_{2^j}(f_0)_{p,\mu}^\tau\Big)^{1/\tau}
  \\&\lesssim \Big(\sum\limits_{j=0}^{m+1}2^{jr\tau} \Big)^{1/\tau}\|f_0\|_{p,\mu}\lesssim 2^{mr}N^{-r}\|g_0\|_{2p,\mu}^2\lesssim
  1.
\end{align*} By the fact that $f_0\in \Pi_{2N}^d$ and  \eqref{122}  we  have
\begin{align}\label{2.35}
  \|f_0\|_{W_{p,\mu}^r} &=\|f_0\|_{p,\mu}+\|(-D_\mu)^{r/2}f_0\|_{p,\mu}\lesssim N^r\|f_0\|_{p,\mu}=\|g_0\|_{2p,\mu}^2\lesssim
  1.
\end{align}
Hence, there exists a positive constant $C$ such that $f_1=Cf_0\in
BX_p^r$, and $f_1(\xi_1)=\dots=f_1(\xi_n)=0$. It follows from
\eqref{3.16} that
\begin{equation*}\label{111-2}g_n(BX_p^r,L_{q,\mu})\ge\inf_{\sub{\xi_1,\dots,\xi_n\in
D_d}}\|f_1\|_{q,\mu}\gtrsim N^{-r}\inf_{\sub{\xi_1,\dots,\xi_n\in
D_d}}\|g_0\|_{2q,\mu}^2\asymp N^{-r}\asymp
n^{-r/d},\end{equation*} which combining with \eqref{111-1}, gives
\eqref{3.16-0}.

The proof of Theorem \ref{thm3.5} is finished.
$\hfill\Box$

\begin{rem} It follows from the proof of  Theorem \ref{thm3.5} that  for $1\le q\le p\le\infty$, $0<\tau\le\infty$, and $r>(d+2\mu)/p$,
$$g_n(BX_p^r
,L_{q,\mu})  \asymp n^{-r/d} \asymp \sup_{f\in
BX_p^r}\|f-G_L(f)\|_{q,\mu},$$where $X_p^r$ denotes
$W_{p,\mu}^{r}$ or $B_\tau^r(L_{p,\mu})$. This implies that the
filtered hyperinterpolation operators  are asymptotically optimal
algorithms in the sense of optimal recovery for $1\le q\le
p\le\infty$.
\end{rem}

Finally  we prove Theorem \ref{thm1}.

\

\noindent{\it Proof of  Theorem \ref{thm1}.}

Since $\|f\|_{B_\infty^r(L_{p,\mu})}\lesssim \|f\|_{W_{p,\mu}^r} $
for $f\in W_{p,\mu}^r$,   by \eqref{1.3} we obtain for $1\le
p\le\infty$ and $r>(d+2\mu)/p$,  $$e_{n}^{\rm
det}(BW^r_{p,\mu})\lesssim e_{n}^{\rm
det}(BB_\infty^r(L_{p,\mu}))\lesssim n^{-r/d}.
$$

 Now we prove the lower bound. Let $\xi_1,\dots,\xi_n$ be any $n$
distinct points on $\Bbb B^d$. Take a positive integer $N$ such
that $2n\le{\rm dim}\,\Pi_N^d\le Cn$. According to the proof of
Theorem \ref{thm3.5}, there exists a function $f_1(x)$ such that
$$f_1\in BW_{p,\mu}^r,\ \ f_1(\xi_1)=\dots=f_1(\xi_n)=0, \ \
f_1(x)\ge 0,$$ and $$\|f_1\|_{1,\mu}=\int_{\Bbb
B^d}f_1(x)w_\mu(x)dx\asymp N^{-r}.$$ It follows from \cite{TWW}
that for $BW_{p,\mu}^r$ which is a balanced convex set,
\begin{align*}
  e_n^{\rm det}(BW_{p,\mu}^r)&\ge\inf_{\sub{\xi_1,\dots,\xi_n\in \Bbb B^d}}\sup_{\sub{f\in F_d\\
  f(\xi_1)=\dots=f(\xi_n)=0}} \Big|\int_{\Bbb B^d}f(x)w_\mu(x)dx\Big|\\ &\ge
  \inf_{\sub{\xi_1,\dots,\xi_n\in \Bbb B^d}}\Big|\int_{\Bbb B^d}f_1(x)w_\mu(x)dx\Big|\gtrsim N^{-r}\asymp n^{-r/d}.
\end{align*}

The proof of Theorem \ref{thm1} is finished. $\hfill\Box$

\section{The upper estimates}\label{sec3}

 This section is devoted to  proving  the upper
estimates of the quantities $e_n^{\rm ran}(X_p^r)$  given as in
\eqref{1.5}. That is, for $1\le p\le\infty$, $r>(d+2\mu)/p$, and
$0<\tau\le\infty$,
\begin{equation}\label{4.1}e_{n}^{\rm ran}(BX_p^r)\lesssim
n^{-\frac{r}{d}-\frac{1}{2}+(\frac{1}{p}-\frac{1}{2})_+},
\end{equation}where $X_p^r$ denotes
$W_{p,\mu}^{r}$ or $B_\tau^r(L_{p,\mu})$.

For this purpose, we will use the positive quadrature rule and the
filtered hyperinterpolation operator to construct an randomized
algorithm to attain the upper bounds. Due to Henrich \cite{H4}, we
need a concrete Monte Carlo method by virtue of the standard Monte
Carlo algorithm. It is defined as follows: let $\{\xi_i\}_{i=1}^N$
be independent, $\Bbb B^{d}$-valued, distributed over $\Bbb B^d$
with respect to the measure $w_\mu(x)dx$ random vectors on
probability space $(\Omega,\Sigma,\nu)$. For any $h\in C(\Bbb
B^d)$, we put
\begin{equation}\label{96-0} Q_N^\oz(h)=\frac1N\sum_{i=1}^N
h(\xi_i(\oz)),\ \oz\in\Omega.
\end{equation} The
following lemma  can be drawn in a same way as in
\cite[Proposition 5.4]{H4}. Here we omit the proof.
\begin{lem}\label{prop4.2} Let $1\le p\le\infty$ and $\mu\ge0$,  Then for any $h\in C(\Bbb B^d)$, we have
\begin{equation}\label{97-0}\Bbb E_\oz |{\rm INT}_d(h)-Q_N^\oz(h)|\lesssim N^{-\frac12+(\frac1p-\frac12)_+}\|h\|_{p,\mu}.
\end{equation}
\end{lem}

\vskip1mm

Now we prove \eqref{4.1}.

\vskip2mm

\noindent{\it Proof of \eqref{4.1}.}

Let $n\in\Bbb N$. Without loss of generality we assume that $n$ is
sufficiently large. Then there exists a  positive quadrature rule
\begin{equation*} \int _{\mathbb{B}^d}f(x)w_{\mu }(x)dx \approx
Q_{L}(f):=\sum_{\xi\in\Lambda _{L}}\lambda_{\xi}f(\xi),\ \lambda
_{\xi }>0,\end{equation*} which is exact for $\Pi_{3L}^d$, where
$\# \Lz_L\le n/2$ and $n\asymp L^d$.

As in Section 3.1, we can construct the filtered
hyperinterpolation operator by
$$
G_{L}(f)(x)=\sum _{\xi \in \Lambda _{L}}\lambda_{\xi
}f(\xi)K_{L,\eta }(x,\xi),
$$where $K_{L,\eta}$ is given as in \eqref{98-0}.
Hence, according to Theorem \ref{thm3} we have for any $f\in
X_p^r,\ 1\leq p\leq \infty,\ r> (d+2\mu)/{p}$,
\begin{equation}\label{4.3-0}\|f-G_{L}(f)\|_{p,\mu}\lesssim L^{-r}\|f\|_{X_p^r}. \end{equation}

 Let $N=\lfloor \frac n2\rfloor$, where $\lfloor x\rfloor$ denotes the largest integer not exceeding $x$.
 We define the randomized algorithm $(A_n^{\omega})$   by
\begin{align*}A_n^{\omega}(f)=Q_N^\omega(f-G_{L}(f))+{\rm INT}_d(G_{L}(f)),
\end{align*}where  $f\in C(\Bbb B^d)$, and $Q_N^\oz$ is the standard Monte Carlo algorithm
given as in \eqref{96-0}.  We also note that $${\rm
INT}_d(G_{L}(f))=\sum_{\xi\in \Lz_\xi}\lz_\xi f(\xi).
$$Clearly, the algorithm $(A_n^{\omega})$ use only at most
$\#\Lz_L+N\le n$ function values of $f$. This means that
$(A_n^\oz)\in \mathcal A_n^{\rm ran}$. Also the algorithm
$(A_n^{\omega})$ is a randomized linear algorithm. It is easy to
check that
$$
|{\rm INT}_d(f)-A_n^{\rm \oz}(f)|=|{\rm INT}_d(g)-Q_N^\oz(g)|,
$$where $g=f-G_L(f)$.
Combining with \eqref{97-0} and \eqref{4.3-0}, we obtain for $f\in
BX_p^r$, $r>(d+2\mu)/p$,
\begin{align*}\Bbb E_\oz|{\rm INT}_d(f)-A_n^{\oz}(f)|&=\Bbb E_\oz |{\rm INT}_d(g)-Q_N^\oz(g)|\\&\lesssim N^{-\frac12+(\frac1p-\frac12)_+}\|f-G_L(f)\|_{p,\mu}
\\&\lesssim N^{-\frac12+(\frac1p-\frac12)_+}L^{-r}\asymp n^{-\frac rd-\frac12+(\frac1p-\frac12)_+},
\end{align*}which leads to
$$e_n^{\rm ran}(BX_p^r)\le e^{\rm ran}(BX_p^r, (A_n^{\omega}))\lesssim n^{-\frac rd-\frac12+(\frac1p-\frac12)_+}.$$

This completes  the proof of \eqref{4.1}.
 $\hfill\Box$

\section{ Lower estimates}\label{sec4}

This section is devoted to proving the lower estimates of the
quantities $e_n^{\rm ran}(X_p^r)$  given as in \eqref{1.5}. That
is, for $1\le p\le\infty$, $r>(d+2\mu)/p$, and $0<\tau\le\infty$,
\begin{equation}\label{4.3}e_{n}^{\rm ran}(BX_p^r)\gtrsim
n^{-\frac{r}{d}-\frac{1}{2}+(\frac{1}{p}-\frac{1}{2})_+},
\end{equation}where $X_p^r$ denotes
$W_{p,\mu}^{r}$ or $B_\tau^r(L_{p,\mu})$. Theorem \ref{thm2}
follows from \eqref{4.1} and \eqref{4.3} immediately.

The proof of \eqref{4.3} is based on the idea of Bakhvalov in
\cite{B1} and Novak in \cite{N2,N3}.

\begin{lem}\label{lem5.1} (See \cite[Lemma 3]{N2}.)

\noindent (a) Let $F\subset L_{1,\mu}$ and $\psi_j,\,j = 1, \dots, 4n$, with the
following conditions:

(i) the $\psi_j$ have disjoint supports and satisfy
$${\rm INT_d}(\psi_j)=\int_{\Bbb B^d} \psi_j(x)w_\mu(x)dx\ge \dz,\ {\rm for}\ j= 1,\dots, 4n.$$

 (ii) $F_1:=\big\{\sum\limits_{j=1}^{4n}
\az_j\psi_j\,\big|\,\alpha_j\in\{-1,1\}\big\}\subset F$.

\noindent Then
$$e_n^{\rm ran}(F)\ge  \frac{1}{2}\dz n^{\frac12}.$$

\noindent (b) We assume that instead of (ii) in statement (a) the
property \vskip 1mm

 (ii') $F_2:=\big\{\pm \psi_j\,\big|\, j= 1, \dots,
4n\big\}\subset F$. \vskip 1mm

\noindent Then
$$e^{\rm ran}_n (F)\ge \frac14\dz.$$
\end{lem}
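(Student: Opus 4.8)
The plan is to prove both parts by Bakhvalov's device, which converts a lower bound for randomized algorithms into an averaged lower bound for deterministic ones, and to combine this with the observation that the pairwise disjoint supports of the $\psi_j$ render most of the blocks invisible to any method reading only $O(n)$ values. Throughout I take $I_j:={\rm INT}_d(\psi_j)\ge\dz$.

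First I would record the reduction. Fix a randomized algorithm $(A^\oz)\in\mathcal A_n^{\rm ran}$ and an arbitrary probability measure $\sigma$ on $F$. Averaging the pointwise error over $\sigma$ and interchanging the two integrals by Fubini gives
\[ e^{\rm ran}(F,(A^\oz))\ge \int_F \Bbb E_\oz|{\rm INT}_d(f)-A^\oz(f)|\,d\sigma(f)=\Bbb E_\oz\int_F|{\rm INT}_d(f)-A^\oz(f)|\,d\sigma(f). \]
The $\oz$-mean of the $\sigma$-averaged cardinality $\int_F n(f,\oz)\,d\sigma(f)$ is at most ${\rm Card}(A^\oz)\le n$. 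Applying Markov's inequality simultaneously to the $\sigma$-averaged error and to this $\sigma$-averaged cardinality, I can select a single $\oz^\ast\in\Omega$ for which the \emph{deterministic} method $A^{\oz^\ast}$ has $\sigma$-average cardinality at most $2n$ and satisfies $\int_F|{\rm INT}_d(f)-A^{\oz^\ast}(f)|\,d\sigma(f)\le 2\,e^{\rm ran}(F,(A^\oz))$. Hence it suffices to bound below the $\sigma$-averaged error of an arbitrary deterministic method reading (on $\sigma$-average) at most $2n$ samples, and then divide by $2$.

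Next I would exploit the disjoint supports. Taking $\sigma$ uniform on $F_1$ (resp.\ on $F_2$), a query at a point $x$ reveals the coordinate attached to at most the one block $\psi_j$ whose support contains $x$; reading at most $2n$ values among the $4n$ blocks therefore leaves at least $2n$ indices \emph{untouched}, and on the untouched blocks the observed data --- hence the output --- is the same for every admissible choice of that coordinate. Making this precise for \emph{adaptive} methods, where the queries depend on the values already seen, is the step I expect to be the main obstacle: it is handled by noting that along the all-zero branch (the only data an untouched block can produce) the method follows a fixed, data-independent sequence of queries, so the set of revealed coordinates $R$ and the output are determined by the revealed signs alone, with $\#(\{1,\dots,4n\}\setminus R)\ge 2n$.

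For part (a), with $\sigma$ uniform over $\az=(\az_j)\in\{-1,1\}^{4n}$, write ${\rm INT}_d(f)=\sum_j\az_j I_j$. Conditioning on the revealed coordinates, the output is some constant $c$ and the residual is the Rademacher sum $Y=\sum_{j\in U}\az_j I_j$ over the untouched set $U$ with $\#U\ge 2n$; since $Y$ is symmetric, $\Bbb E|c+Y|\ge\Bbb E|Y|$, and a Khintchine-type lower bound with the sharp constant $2^{-1/2}$ gives $\Bbb E|Y|\ge 2^{-1/2}\big(\sum_{j\in U}I_j^2\big)^{1/2}\ge 2^{-1/2}(2n)^{1/2}\dz=\dz\,n^{1/2}$; dividing by the reduction factor $2$ yields $e_n^{\rm ran}(F)\ge\frac12\dz\,n^{1/2}$. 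For part (b), with $\sigma$ uniform over the $8n$ functions $\pm\psi_j$, an untouched index $j$ produces identical all-zero data for the two inputs $\pm\psi_j$, so the method returns one number $a_j$ for both; as $|I_j-a_j|+|I_j+a_j|\ge 2I_j\ge 2\dz$, averaging over the $\ge 2n$ untouched blocks gives $\sigma$-averaged error at least $\frac{1}{8n}\cdot 2n\cdot 2\dz=\frac\dz2$, and the reduction factor $2$ gives $e_n^{\rm ran}(F)\ge\frac14\dz$.
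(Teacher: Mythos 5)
The paper does not prove this lemma; it cites \cite[Lemma 3]{N2}, and your overall strategy (Bakhvalov's reduction to an averaged deterministic problem, disjoint supports limiting the revealed coordinates, Khintchine for (a), the inequality $|I_j-a|+|I_j+a|\ge 2I_j$ for (b)) is indeed the strategy behind the cited proof. However, there is a genuine gap in the way you execute the reduction. After selecting $\omega^\ast$ you control only the $\sigma$-\emph{average} cardinality of the deterministic fiber $A^{\omega^\ast}$, yet you then argue as if its cardinality were uniformly at most $2n$, concluding that at least $2n$ blocks are untouched. For adaptive methods with input-dependent cardinality this inference is false, and in part (b) it fails outright: consider the deterministic method that queries one point in $\{\psi_1\neq0\}$, then one in $\{\psi_2\neq0\}$, and so on, stopping (and outputting the exact integral $\pm I_j$) as soon as it sees a nonzero value. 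On input $\pm\psi_j$ it uses exactly $j$ samples, so its $\sigma$-average cardinality over $F_2$ is $(4n+1)/2\approx 2n$, yet it touches every block and has error zero. Thus the statement you reduce to --- ``any deterministic method of $\sigma$-average cardinality $\le 2n$ has $\sigma$-average error $\ge\delta/2$ on $F_2$'' --- is simply not true, and part (b) does not follow. (In part (a) the same slip is less fatal but still costs you: $\#U\ge 2n$ holds only on average, and since the Khintchine bound is proportional to $(\#U)^{1/2}$, which is concave, Jensen runs the wrong way; a careful version of your route yields only $2^{-3/2}\delta n^{1/2}$, not the claimed $\tfrac12\delta n^{1/2}$. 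There is also the minor point that two applications of Markov with threshold factor $2$ each leave exceptional sets of measure $\le\tfrac12$, whose union can have full measure, so the simultaneous selection of $\omega^\ast$ needs a $(2+\varepsilon)$.)

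The fix is to avoid selecting a single $\omega^\ast$ altogether: keep the expectation $\Bbb E_\omega$ outside and use the cardinality hypothesis in its pointwise form $\Bbb E_\omega\, n(f,\omega)\le n$ for \emph{each} $f$. For (b), observe that for fixed $\omega$ the run of $A^\omega$ on $\pm\psi_j$ follows the all-zero branch until it first queries a point of $\{\psi_j\neq0\}$, say at step $k_j(\omega)$; the $k_j(\omega)$ for the distinguished indices are distinct positive integers, so distinguishing $k$ blocks forces $\sum_j n(\psi_j,\omega)\ge 1+2+\dots+k\ge k^2/2$, and combining this with $\sum_j\Bbb E_\omega n(\psi_j,\omega)\le 4n^2$ bounds the expected number of distinguished blocks by $2\sqrt2\,n<4n$; the remaining confused pairs each contribute $2\delta$ to the error sum, which gives the stated bound. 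For (a), conditioning on the decision-tree leaf and applying Szarek's inequality as you do, then minorizing $u\mapsto\sqrt{(4n-u)_+}$ by its convex hull before applying Jensen to $\Bbb E_{\omega,\alpha}$, recovers (in fact improves) the constant $\tfrac12$. In short: right skeleton, but the adaptive, variable-cardinality case --- which you yourself identified as the main obstacle --- is exactly where the argument as written breaks.
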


By this lemma, we proceed to construct a sequence of functions
$\{\psi_j\}_{j=1}^{4n}$ satisfying the conditions of Lemma
\ref{lem5.1} for $F=BX_p^r$, where $X_p^r$ denotes $W_{p,\mu}^{r}$
or $B_\tau^r(L_{p,\mu})$,  $1\le p\le \infty,\ r>(d+2\mu)/p,\
0<\tau\le\infty$.

\

For a given $n\in \Bbb N$, choose a positive integer $m$
satisfying $n\asymp m^{d}$, $m\ge 6$, and
$\{x_j\}_{j=1}^{4n}\subset{\bf B}({\bf 0},\frac23)$ such that
$${\bf B}(x_i,\frac2m)\bigcap{\bf B}(x_j,\frac2m)=\emptyset,\ \
{\rm if}\ \ i\neq j,$$ where ${\bf B}(\xi,r)=\big\{x\in\Bbb
B^d\,\big|\,|x-\xi|\le r\big\}$ for $\xi\in\Bbb B^d$ and $r>0$.

Let $\varphi$ be a nonnegative
$C^\infty$-function on $\mathbb{R}^d$ supported in $\Bbb B^d$
and being equal to 1 on ${\bf B}({\bf 0},\frac12)$. We define
$$\varphi_j(x)=\varphi(m(x-x_j)),\  j=1,\dots,4n.$$
 Clearly,
$${\rm supp}\,\varphi_j\subset{\bf B}(x_j,\frac1m)\subset{\bf B}({\bf 0},\frac56),\  j=1,\dots,4n,$$and
\begin{equation}\label{5.0}{\rm supp}\,\varphi_i\cap {\rm supp}\,\varphi_j=\emptyset,\ {\rm for}\ i\neq
j.\end{equation} It is easy to verify that
\begin{equation}\label{5.1}\|\varphi_j\|_{p,\mu}\asymp\Big(\int_{{\bf B}(x_j,\frac1m)}|\varphi_{j}(x)|^pdx\Big)^{1/p}=\Big(\int_{{\bf B}({\bf 0},\frac1m)}|\varphi(mx)|^pdx\Big)^{1/p}\asymp  m^{-d/p}.
\end{equation}

We set $$F_0:=\big\{f_{\boldsymbol\az}:=\sum\limits_{j=1}^{4n}
\az_j\varphi_j\,\big|\,
{\boldsymbol\az}=(\az_1,\cdots,\az_{4n})\in\mathbb{R}^{4n}\big\}.$$
Then we have the following lemma.

\begin{lem}\label{lem44} If $f_{\boldsymbol\az}\in F_0$, then for $r>0$, $1\le p\le\infty$, and $0<\tau\le\infty$,
\begin{align}\label{5.31}\|f_{{\boldsymbol\az}}\|_{X_p^r}\lesssim m^{r-d/p}\|{\boldsymbol\az}\|_{l_p^{4n}},
\end{align}where $X_p^r$ denotes $W_{p,\mu}^{r}$ or $B_\tau^r(L_{p,\mu})$,
and $$\|{\boldsymbol\az}\|_{\ell_p^{4n}}:=\left\{
\begin{aligned}&\big(\sum_{j=1}^{4n}|\az_j|^p\big)^{1/p},\ \ &1\leq
p<\infty,
\\
&\max\limits_{1\le j\le
4n}|\az_j|,\ \ \
&p=\infty.\end{aligned}\right.$$

\end{lem}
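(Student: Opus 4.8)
The plan is to prove the bound \eqref{5.31} separately for the two norms $\|\cdot\|_{W_{p,\mu}^r}$ and $\|\cdot\|_{B_\tau^r(L_{p,\mu})}$, reducing each to the single-bump estimates already available from \eqref{5.1}. The essential geometric fact is that the $\varphi_j$ have pairwise disjoint supports by \eqref{5.0}, each contained in a ball of radius $1/m$ about $x_j$, and all the $x_j$ lie in ${\bf B}({\bf 0},2/3)$, so that the Jacobi weight $w_\mu(x)=b_d^\mu(1-|x|^2)^{\mu-1/2}$ is bounded above and below by absolute constants on $\supp\varphi_j$. This last point is what lets me pass freely between the weighted norm $\|\cdot\|_{p,\mu}$ and the unweighted Lebesgue norm on the supports, and it is why the single-bump estimate $\|\varphi_j\|_{p,\mu}\asymp m^{-d/p}$ in \eqref{5.1} holds uniformly in $j$.

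For the Sobolev norm, I would first estimate a single $\varphi_j$. Since $\varphi_j(x)=\varphi(m(x-x_j))$ is a rescaled fixed $C^\infty$ bump, each derivative of order $k$ picks up a factor $m^k$, so all partial derivatives of $\varphi_j$ up to the order needed to control $(-D_\mu)^{r/2}$ satisfy $\|D^\beta\varphi_j\|_{\infty}\lesssim m^{|\beta|}$, and by the support restriction $\|D^\beta\varphi_j\|_{p,\mu}\lesssim m^{|\beta|-d/p}$. Because $D_\mu=\triangle-(x\cdot\nabla)^2-(2\mu+d-1)x\cdot\nabla$ is a second-order differential operator with polynomially bounded coefficients on the compact region ${\bf B}({\bf 0},5/6)$, iterating gives $\|(-D_\mu)^{r/2}\varphi_j\|_{p,\mu}\lesssim m^{r-d/p}$ for integer $r/2$; for general $r$ one interpolates or uses the Jackson-type comparison, so that $\|\varphi_j\|_{W_{p,\mu}^r}\lesssim m^{r-d/p}$ uniformly in $j$. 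Then by disjointness of supports, the full $L_{p,\mu}$ norm of $f_{\boldsymbol\az}=\sum_j\az_j\varphi_j$ and of $(-D_\mu)^{r/2}f_{\boldsymbol\az}$ factor as an $\ell_p$ sum, giving $\|f_{\boldsymbol\az}\|_{W_{p,\mu}^r}\lesssim m^{r-d/p}\|{\boldsymbol\az}\|_{\ell_p^{4n}}$.

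For the Besov norm I would argue through the best-approximation numbers $E_{2^k}(f_{\boldsymbol\az})_{p,\mu}$. Using the trivial bound $E_{2^k}(f)_{p,\mu}\le\|f\|_{p,\mu}$ for $2^k\lesssim m$ and the Jackson inequality \eqref{2.1} together with the Sobolev estimate just proved for $2^k\gtrsim m$, I obtain $E_{2^k}(f_{\boldsymbol\az})_{p,\mu}\lesssim\min\{1,(2^k/m)^{-r}\}\,m^{r-d/p}\|{\boldsymbol\az}\|_{\ell_p^{4n}}\cdot m^{-(r)}$ appropriately normalized; summing the geometric-type series $\sum_k 2^{kr\tau}E_{2^k}^\tau$ then yields the same power $m^{r-d/p}$, the decisive point being that the dyadic sum converges and is dominated by its largest term near $2^k\asymp m$. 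Here I can also invoke $W_{p,\mu}^r\subset B_\infty^r(L_{p,\mu})$ and the embedding chain $B_{\tau_1}^r\subset B_{\tau_2}^r$ to reduce across the parameter $\tau$. The main obstacle, as usual, is handling the fractional operator $(-D_\mu)^{r/2}$ for non-even $r$ cleanly; the cleanest route is to bound $\|f_{\boldsymbol\az}\|_{B_\infty^r(L_{p,\mu})}$ directly via the approximation numbers and then use $\|\cdot\|_{B_\infty^r}\lesssim\|\cdot\|_{W_{p,\mu}^r}$ only where an integer-order argument suffices, thereby sidestepping any delicate fractional-power computation while still covering all admissible $r>0$.
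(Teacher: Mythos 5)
Your overall strategy coincides with the paper's: single-bump estimates, disjoint supports to assemble the $\ell_p$ combination, interpolation to reach the fractional power, and a dyadic split of $\sum_j 2^{jr\tau}E_{2^j}^\tau$ at $2^j\asymp m$ for the Besov norm. However, there is a genuine gap in your treatment of the Sobolev case. You first bound $\|(-D_\mu)^{r/2}\varphi_j\|_{p,\mu}\lesssim m^{r-d/p}$ for each single bump and then claim that $\|(-D_\mu)^{r/2}f_{\boldsymbol\az}\|_{p,\mu}$ ``factors as an $\ell_p$ sum by disjointness of supports.'' For non-integer powers $(-D_\mu)^{r/2}$ is defined spectrally, $\sum_k (k(k+2\mu+d-1))^{r/2}{\rm Proj}_k f$, and is a nonlocal operator: the functions $(-D_\mu)^{r/2}\varphi_j$ do \emph{not} have disjoint supports, so the pointwise $\ell_p$ assembly is unavailable, and the triangle inequality only yields the weaker $\ell_1$ combination $\|{\boldsymbol\az}\|_{\ell_1^{4n}}$, which is not enough for $p>1$. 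Your proposed sidestep --- bounding the Besov norm directly and invoking $\|\cdot\|_{B_\infty^r}\lesssim\|\cdot\|_{W_{p,\mu}^r}$ --- does not repair this, because that embedding goes the wrong way: it cannot produce the required bound on $\|f_{\boldsymbol\az}\|_{W_{p,\mu}^r}$, which the lemma explicitly asserts. The correct order of operations (and the one the paper uses) is to exploit locality only at the integer level: since $(-D_\mu)^v$ is a genuine differential operator for $v\in\Bbb N$, the supports of $(-D_\mu)^v\varphi_j$ remain disjoint, giving $\|(-D_\mu)^v f_{\boldsymbol\az}\|_{p,\mu}\lesssim m^{2v-d/p}\|{\boldsymbol\az}\|_{\ell_p^{4n}}$ for an integer $v>r$; one then applies the Kolmogorov--Landau type inequality of Ditzian \cite{Di} to the \emph{whole} function $f_{\boldsymbol\az}$,
\begin{equation*}
\|(-D_\mu)^{r/2} f_{\boldsymbol\az}\|_{p,\mu}\lesssim\|(-D_\mu)^v f_{\boldsymbol\az}\|_{p,\mu}^{\frac{r}{2v}}\,\|f_{\boldsymbol\az}\|_{p,\mu}^{\frac{2v-r}{2v}}\lesssim m^{r-d/p}\|{\boldsymbol\az}\|_{\ell_p^{4n}},
\end{equation*}
so that the interpolation happens after, not before, the summation.

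A smaller point on the Besov part: your displayed tail bound $E_{2^k}(f_{\boldsymbol\az})_{p,\mu}\lesssim (2^k/m)^{-r}(\cdots)$ with exponent exactly $r$ makes every term $2^{kr\tau}E_{2^k}^\tau$ of the tail comparable in size, so that dyadic sum would diverge. You must apply Jackson with a smoothness index $\upsilon$ strictly larger than $r$ (conveniently an even integer, so the integer-power Sobolev estimate above applies), which gives $E_{2^k}\lesssim 2^{-k\upsilon}m^{\upsilon-d/p}\|{\boldsymbol\az}\|_{\ell_p^{4n}}$ and a convergent geometric tail dominated by the term at $2^k\asymp m$. With these two corrections your argument becomes the paper's proof.
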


\begin{proof}
 Indeed, for any $f_{\boldsymbol\az}\in F_0$,  it follows from \eqref{5.0} and \eqref{5.1} that
\begin{equation}\label{5.2}\|f_{\boldsymbol\az}\|_{p,\mu}\asymp
 m^{-d/p}\|{\boldsymbol\az}\|_{\ell_p^{4n}}. \end{equation}
For a positive integer $v>r$, by the definition of $-D_\mu$ and \eqref{5.0}, it is easy to verify that
$${\rm supp}\,(-D_\mu)^v\varphi_i\,\bigcap\, {\rm supp}\,(-D_\mu)^v\varphi_j=\emptyset,\ {\rm for}\ i\neq
j,$$and
$$\|(-D_\mu)^v\varphi_j\|_{p,\mu}\lesssim m^{2v-d/p},$$ which leads to
\begin{equation}\label{5.3-0}
  \|(-D_\mu)^v f_{\boldsymbol\az}\|_{p,\mu}\lesssim m^{2v-d/p}\|{\boldsymbol\az}\|_{\ell_p^{4n}}.
\end{equation}
It follows from the Kolmogorov type inequality (see \cite[Theorem 8.1]{Di}) that
\begin{equation}\label{5.3}
 \|(-D_\mu)^{r/2} f_{\boldsymbol\az}\|_{p,\mu}\lesssim\|(-D_\mu)^v f_{\boldsymbol\az} \|_{p,\mu}^{\frac{r}{2v}}\, \|f_{\boldsymbol\az}\|_{p,\mu}^{\frac{2v-r}{2v}}
\lesssim m^{r-d/p}\|{\boldsymbol\az}\|_{\ell_p^{4n}},
\end{equation}which combining with \eqref{5.2}, we obtain \eqref{5.31} for $W_{p,\mu}^r$.

By the fact that
$$\|f_{\boldsymbol\az}\|_{B_\infty^r(L_{p,\mu})}\lesssim
\|f_{\boldsymbol\az}\|_{B_\tau^r(L_{p,\mu})},\ 0<\tau<\infty,$$ it
suffices to show \eqref{5.31} for $B_\tau^r(L_{p,\mu})$,
$0<\tau<\infty$. Since $E_{2^j}(f_{\boldsymbol\az})_{p,\mu}\le
\|f_{\boldsymbol\az}\|_{p,\mu}$ for any $j\ge0$, by \eqref{5.31}
 we have
\begin{align}\label{5.12}
  \sum\limits_{2^j<m}\Big(2^{jr} E_{2^j}(f_{\boldsymbol\az})_{p,\mu}\Big)^\tau&\le \|f_{\boldsymbol\az}\|_{p,\mu}^\tau\sum\limits_{2^j<m}2^{jr\tau}\notag\\&\asymp m^{r\tau}\|f_{\boldsymbol\az}\|_{p,\mu}^\tau\asymp m^{(r-d/p)\tau}\|{\boldsymbol\az}\|_{\ell_p^{4n}}^\tau.
\end{align}
Choose a positive number $\upsilon>r$, by \eqref{2.1} we obtain for any $j\ge0$,
 $$E_{2^j}(f_{\boldsymbol\az})_{p,\mu}\lesssim 2^{-j\upsilon}\|f_{\boldsymbol\az}\|_{W_{p,\mu}^\upsilon},$$ which combining with \eqref{5.31} for $W_{p,\mu}^r$, we get
 \begin{align}\label{5.13}
  \sum\limits_{2^j\ge m}\Big(2^{jr} E_{2^j}(f_{\boldsymbol\az})_{p,\mu}\Big)^\tau&
  \le \|f_{\boldsymbol\az}\|_{W_{p,\mu}^\upsilon}^\tau\sum\limits_{2^j\ge m}2^{j(r-\upsilon)\tau}\nonumber
  \\&\asymp m^{(r-\upsilon)\tau}\|f_{\boldsymbol\az}\|_{W_{p,\mu}^\upsilon}^\tau\lesssim m^{(r-d/p)\tau}\|{\boldsymbol\az}\|_{\ell_p^{4n}}^\tau.
\end{align}
It follows from \eqref{5.2}, \eqref{5.12}, and \eqref{5.13} that
 \begin{align*}
 \|f\|_{B_\tau^r(L_{p,\mu})}&:=\|f\|_{p,\mu}+\Big( \sum\limits_{j=0}^\infty\Big(2^{jr} E_{2^j}(f_{\boldsymbol\az})_{p,\mu}\Big)^\tau\Big)^{1/\tau}\lesssim m^{r-d/p}\|{\boldsymbol\az}\|_{\ell_p^{4n}}.
\end{align*}

This completes the  proof of Lemma 5.2.
\end{proof}

\vskip2mm

Finally we turn to prove \eqref{4.3}.

\

\noindent{\it Proof of \eqref{4.3}.}

First we consider the case $2\le p\le \infty$. By the fact that
$$X_\infty^r\subset X_p^r,\ \  2\le p\le \infty,$$ it suffices to consider the case $p=\infty$.
It follows from \eqref{5.31} that  when $\az_j\in\{-1,1\}, \,
j=1,\dots, 4n,$
$$\|f_{\boldsymbol\az}\|_{X_\infty^r}\lesssim m^{r}\|{\boldsymbol\az}\|_{\ell_\infty^{4n}}\lesssim m^r.$$
Hence, there exists a positive constant $C_1$ such that
$C_1m^{-r}f_{\boldsymbol\az}\in BX_\infty^r$. Set
$$\psi_j(x):=C_1m^{-r}\varphi_j(x),\ j=1,\dots,4n.$$
We have
$$F_1:=\Big\{\sum\limits_{j=1}^{4n}
\az_j\psi_j\ \big| \ \alpha_j\in\{-1,1\},\
j=1,\dots,4n\Big\}\subset BX_\infty^r.$$ It follows from
\eqref{5.1} that
$${\rm INT}_d(\psi_j)=\int_{\Bbb B^d}\psi_j(x)w_\mu(x)dx=C_1 m^{-r}\|\varphi_j\|_{1,\mu}\asymp m^{-r-d},$$
Applying Lemma \ref{lem5.1} (a) we obtain for $2\le p\le
\infty$,
\begin{equation}\label{5.5}e_n^{\rm ran}(BX_p^r)\ge e_n^{\rm ran}(BX_\infty^r)\gtrsim m^{-r-d}n^{1/2}\asymp
n^{-\frac{r}{d}-\frac12}.\end{equation}

Next we consider the case $1\le p<2$. It follows from \eqref{5.31}
that $$\|\pm \varphi_j\|_{X_p^r}\lesssim m^{r-d/p}.$$Hence, there
exists a positive constant $C_2$ such that
$$\psi_j(x):=C_2m^{-r+d/p}\varphi_j(x)\in BX_p^r,\ j=1,\dots,4n.$$
We have$$F_2:=\{\pm\psi_j:\ j=1,\dots,4n\}\subset BX_p^r.$$ It
follows from \eqref{5.1} that
$${\rm INT}_d(\psi_j)=\int_{\Bbb B^d}\psi_j(x)w_\mu(x)dx=C_2 m^{-r+d/p}\|\varphi_j\|_{1,\mu}\asymp m^{-r+d/p-d}.$$
Applying Lemma \ref{lem5.1} (b), we obtain for $1\le p<2$,
\begin{equation*}e_n^{\rm ran}(BX_p^r)\gtrsim m^{-r+d/p-d}\asymp
n^{-\frac{r}{d}+\frac1p-1},\end{equation*} which combining with
\eqref{5.5}, gives  the lower bounds of $e_{n}^{\rm
ran}(BX_p^{r})$ for $1\le p\le\infty$.

This completes the proof of \eqref{4.3}.
 $\hfill\Box$

\

\noindent{\bf Acknowledgment}  Jiansong Li and Heping Wang  were
supported by the National Natural Science Foundation of China
(Project no. 11671271).


\begin{thebibliography}{99}

\bibitem{B1} N. S. Bakhvalov, On approximate computation of integrals, Vestnik MGV, Ser. Math. Mech. Aston. Phys. Chem. 4 (1959) 3-18.
\bibitem{B2} N. S. Bakhvalov, On a rate of convergence of indeterministic integration processes within the functional classes $W_{p}^{l}$,
Theory Probab. Appl. 7 (1962) 226-227.
\bibitem{B3} N. S. Bakhvalov, On the optimality of linear methods for operator approximation in convex
classes of functions, USSR Computational Mathematics and Mathematical Physics, 11 (1971) 244-249.
\bibitem{BH}  J. S. Brauchart, K. Hesse, Numerical integration over spheres of arbitrary dimension,  Constr. Approx. 25 (1) (2007) 41-71.
\bibitem{BV} V. A. Bykovskii, On the correct order of the error of optimal cubature formulas in spaces with dominant derivative,
and on quadratic deviations of grids. Akad. Sci. USSR, Vladivostok, Computing Center Far-Eastern Scientific Center, 1985.

\bibitem{DW} F. Dai, H. Wang, Optimal cubature formulas in weighted Besov spaces with $A_{\infty}$ weights on multivariate domains, Constr. Approx. 37 (2013) 167-194.
\bibitem{DX} F. Dai, Y. Xu, Approximation Theory and Harmonic Analysis on Spheres and Balls, Springer, 2013.
\bibitem{Di} Z. Ditzian, Fractional derivatives and best approximation, Acta Math. 81 (4) (1998) 323-348.
\bibitem{DY} L. Duan, P. Ye, Integration over the sphere $\mathbb{S}^{d-1}$ on Besov classes in different settings,
Preprint.
\bibitem{Du1} V. V. Dubinin, Cubature formulas for classes of functions with
bounded mixed difference. Math. Sb. 183 (7) (1992) 23-34.
\bibitem{Du2} V. V. Dubinin,  Cubature formulas for Besov classes, Izv. Ross.
Akad. Nauk Ser. Math. 61 (2) (1997) 27-52.
\bibitem{DU} D. Dung, T. Ullrich,  Lower bounds for the integration error
for multivariate functions with mixed smoothness and optimal
Fibonacci cubature for functions on the square, Math. Nachr. 288 (7) (2015)
 743-762.


\bibitem{FY1} G. Fang,  P. Ye, Complexity of deterministic and
randomized methods for multivariate integration problems for the
class $H^p(I_d)$, IMA J. Numer. Anal. 25 (3) (2005) 473-485.
\bibitem{FKK} K. K. Frolov, Upper bounds on the error of quadrature formulas on classes of functions, Doklady Akademy Nauk USSR, 231 (1976) 818-821.



\bibitem{H4} S. Heinrich, Random approximation in numerical analysis, Appl. Math. 150 (1994) 123-171.
\bibitem{HS} K. Hesse, I. H. Sloan, Hyperinterpolation on the
sphere, in: ``Frontiers in Interpolation and Approximation
(Dedicated to the memory of Ambikeshwar Sharma)" (editors: N.K.
Govil, H.N. Mhaskar, R.N. Mohapatra, Z. Nashed, J. Szabados),
Chapman \& Hall/CRC, 2006 pp. 213-248.
\bibitem{H} K. Hesse, A lower bound for the worse-case cubature error on sphere of arbitrary dimension,  Numer. Math. 103 (2006) 413-433.
\bibitem{HMS} K. Hesse, H. N. Mhaskar, I. H. Sloan, Quadrature in Besov spaces on the Euclidean sphere, J. Complexity 23 (2007) 528-552.


\bibitem{KN} D. Krieg, E. Novak, A universal algorithm for multivariate integration, Found. Comput. Math. 17 (4) (2017) 895-916.
\bibitem{KPX} G. Kyriazis, P. Petrushev, Y. Xu, Decomposition of weighted
Triebel-Lizorkin and Besov spaces on the ball, Proc. London Math.
Soc. 97 (2008) 477-513.


\bibitem{LW} Jiansong Li, Heping Wang, Weighted $\ell_q$ approximation problems on the ball and on the sphere, preprint.

\bibitem{Li} Jingyu Li, On Filtered Polynomial Approximation on the Unit
Ball, Master Thesis, Capital Normal University,  2018.
\bibitem{L} G.G. Lorentz, Approximation of Functions. Holt, Rinehart and Winston, New York, 1966.

\bibitem{N2} E. Novak, Stochastic properties of quadrature formulas, Numer. Math. 53 (1988) 609-620.

\bibitem{N3} E. Novak, Deterministic and Stochastic Error Bound in Numerical Analysis, Springer, Berlin, 1988.

\bibitem{N4} E. Novak, Some results on the complexity of numerical
integration. Monte Carlo and quasi-Monte Carlo methods,
Springer Proc. Math. Stat. 163 (2016) 161-183.

\bibitem{NUU} V. K. Nguyen, M. Ullrich, T. Ullrich, Change of variable in spaces of mixed smoothness and numerical integration of multivariate
functions on the unit cube, Constr. Approx. 46 (2017) 69-108.

\bibitem{PX} P. Petrushev, Y. Xu, Localized polynomial frames on the ball,
Constr. Approx. 27 (2008) 121-148.

 \bibitem{R1} M. Reimer, Hyperinterpolation on the sphere at the minimal
projection order, J. Approx. Theory 104 (2) (2000) 272-286.

\bibitem{SWo} I. H. Sloan, R. S. Womersley, Filtered hyperinterpolation: a constructive polynomial approximation on the sphere, GEM Int. J. Geomath. 3 (1) (2012) 95-117.

\bibitem{SM} M. M. Skriganov, Constructions of uniform distributions in terms of geometry of numbers, St. Petersburg Math. J. 6 (1995) 635-664.


\bibitem{T3} V. N. Temlyakov, On a way of obtaining lower estimates for the error of quadrature formulas, Math. USSR Sb. Russian, 181 (1990) 1403-1413.
 English translation: Math. USSR Sbornik, 71 (1992) 247-257.
\bibitem{TWW} J. F. Traub, G. W. Wasilkowski, H. Wo\'zniakowski, Information-Based Complexity,
Academic Press, New York, 1988.



\bibitem{UU} M. Ullrich, T. Ullrich, The role of Frolov's cubature formula for functions with bounded mixed derivative, SIAM J. Numer. Anal. 54 (2) (2016) 969-993.

\bibitem{U} M. Ullrich,  A Monte Carlo method for integration of multivariate smooth functions, SIAM J. Numer. Anal. 55 (3) (2017)  1188-1200.

\bibitem{W1} H. Wang, Quadrature formulas for classes of functions with
Bounded mixed derivative or difference. Science in China (Series
A) 40 (5) (1997) 429-495.

\bibitem{Wang} H. Wang, Optimal lower estimates for the worst case quadrature
error and the approximation by hyperinterpolation operators in the
Sobolev space setting on the sphere, Int. J. Wavelets
Multiresolut. Inf. Process. 7 (6) (2009) 813-823.


\bibitem{WS} H. Wang, I. H. Sloan, On filtered polynomial approximation on the sphere, J. Fourier Anal. Appl. 23 (4) (2017) 863-876.

\bibitem{WZ} H. Wang, Y. Zhang, Optimal randmized quadrature for Sobolev classes on the sphere, preprint.

\bibitem{WHLW} H. Wang, Z. Huang, C. Li, L. Wei, On the norm of the
hyperinterpolation operator on the unit ball, J. Approx. Theory
192 (2015) 132-143.

\bibitem {Xu1} Y. Xu, Summability of Fourier orthogonal series for
Jacobi weight on a ball in $\Bbb R^d$, Trans. Amer. Math. Soc. 351
(1999) 2439-2458.
\bibitem {Xu} Y. Xu, Weighted approximation of functions on the unit sphere, Constr. Approx. 21 (2005) 1-28.

\bibitem{Y} P. Ye, Computational complexity of the integration problem for
anisotropic classes,  Adv. Comput. Math. 23 (4) (2005)  375392.
\end{thebibliography}
\end{document}